\documentclass[11pt]{amsart}
\usepackage{geometry}                
\geometry{letterpaper}                   
\usepackage{graphicx}
\usepackage{amssymb}
\usepackage{epstopdf}
\DeclareGraphicsRule{.tif}{png}{.png}{`convert #1 `dirname #1`/`basename #1 .tif`.png}

\newtheorem{theorem}{Theorem}
\newtheorem{lemma}[theorem]{Lemma}
\newtheorem{prop}[theorem]{Proposition}
\newtheorem*{prop*}{Proposition}

\newtheorem*{cor*}{Corollary}
\newtheorem{corollary}[theorem]{Corollary}

\theoremstyle{definition}
\newtheorem{definition}[theorem]{Definition}
\newtheorem{example}[theorem]{Example}

\newtheorem{remark}[theorem]{Remark}
\newtheorem*{remark*}{Remark}
\newtheorem*{example*}{Example}

\newcommand{\rk}{{\rm {rk}}}
\newcommand{\Z}{{\Bbb {Z}}}
\newcommand{\C}{{\Bbb {C}}}
\newcommand{\secat}{{\sf {secat}}}
\newcommand{\e}{{\mathfrak e}}
\newcommand{\CP}{{\Bbb {CP}}}
\newcommand{\h}{{\mathfrak h}}
\newcommand{\w}{{\rm w}}
\newcommand{\id}{{\rm id}}
\newcommand{\TC}{{\sf TC}}

\title{Parametrized topological complexity of sphere bundles}
\author{M. Farber}
\thanks{M. Farber was partially supported by a grant from the EPSRC}
\address{School of Mathematical Sciences, Queen Mary University of London}
\author{S. Weinberger}
\thanks{S. Weinberger was partially supported by a grant from the NSF}
\address{Department of Mathematics, University of Chicago}
\date{\today}   
 \subjclass{55M30}                                        

\begin{document}

\maketitle

\begin{abstract} Parametrized motion planning algorithms \cite{CFW} 
 have high degree of flexibility and universality, they can work under a variety of external conditions, 
 which are viewed as parameters and form part of the input of the algorithm.
 In this paper we analyse the parameterized motion planning problem in the case of sphere bundles.
Our main results provide upper and lower bounds for the parametrized topological complexity; the upper bounds typically involve sectional categories of the associated fibrations and the lower bounds are given in terms of characteristic classes and their properties. We explicitly compute the parametrized topological complexity in many examples and show that it may assume arbitrarily large values. 
 \end{abstract}

 \section{Introduction}
 
 The motion planning problem of robotics is one of the central themes which makes possible autonomous 
 robot motion, see \cite{LV}. A motion planning algorithm takes as input the initial and the desired states of the system and produces as output a motion of the system starting at the initial and ending at the desired states. 
 A robot is \lq\lq told\rq\rq\ where it needs to go and the execution of this task, including selection of a specific route of motion, is made by the robot itself. 
 In this approach it is understood that the external conditions (such as the positions of the obstacles and the geometry of the enclosing domain) are known. 
 
 In recent papers \cite{CFW}, \cite{CFW2}, motion planning algorithms of a new type were considered. 
 These are {\it parametrized motion planning algorithms}, which, besides the initial and desired states, take as input the parameters characterising the external conditions. The output of a parametrized motion planning algorithm is a continuous motion of the system from the initial to the desired state respecting the given external conditions. The papers \cite{CFW, CFW2} laid out the new formalism and analysed in full detail the problem of moving a number $n$ of robots in the domain with $m$ unknown obstacles. The authors used techniques of algebraic topology and were able to find the answer by using a combination of upper and lower bounds. The lower bounds use the structure of the cohomology algebras. 
A brief introduction into the concept of parametrized topological complexity is given below in \S \ref{sec3}. 

The purpose of this article is to analyse the parametrized topological complexity of sphere bundles. 
The Stiefel - Whitney and Euler characteristic classes play an important role in these estimates. 
Our main results give upper and lower bounds for the parametrized topological complexity and we 
compute the parametrized topological complexity of a number of examples. 

It would be interesting to adopt the theory of weights of cohomology classes of E. Fadell and S. Husseini  \cite{FH} with the purpose of strengthening the cohomological lower bounds in application to the parametrized topological complexity. 

The authors thank the referee for very helpful comments. 
 
\section{Sectional category of sphere bundle}\label{s1}

In this section we recall some well-known results, see \cite{S}, which will be useful later in this paper. 

Let $\xi: E\to B$ be a rank $q$ vector bundle. We shall denote
$q=\rk (\xi)$ and shall write $E(\xi)$ instead of $E$ when dealing with several bundles at once. In this article we shall always assume that vector bundles are equipped with metric structures, i.e. with continuous scalar product on fibres. 

We shall denote by $\dot \xi : \dot E\to B$ the associated bundle of $(q-1)$-dimensional spheres, i.e. $\dot \xi=\xi|_{\dot E}$. Here $\dot E\subset E$ is the set of vectors of length 1. 
If $\xi$ is oriented, its Euler class $\e(\xi)\in H^q(B)$ is defined, see \cite{MS}. Here the cohomology is taken with integral coefficients. 
We shall adopt the convention of skipping $\Z$ from the notations while indicating explicitly all other coefficient groups. 

For a cohomology class $\alpha\not=0$ we shall denote by $\h(\alpha)$ its {\it height}, i.e. the largest integer $k$ such that its $k$-th power is nonzero,  $\alpha^k\not=0$. We shall also set $\h(\alpha)=0$ for $\alpha=0$. 

Recall that the {\it sectional category} (or {\it Schwarz genus} \cite{S}) of a fibration with base $B$ is defined as the minimal integer $k\ge 0$ such that 
there exists an open cover $$B=U_0\cup U_1\cup \dots \cup U_k$$ with the property that over each set $U_i$ the fibration admits a continuous section.

\begin{lemma}\label{lm1} Let $\xi: E\to B$ be a vector bundle, $q=\rk(\xi)$.  Then
\begin{enumerate}
\item[{(A)}] The sectional category of the sphere fibration $\dot \xi: \dot E\to B$ satisfies 
$$\secat(\dot \xi) \ge \h(\w_q(\xi))$$ where $\w_q(\xi)\in H^q(B;\Z_2)$ is the top Stiefel-Whitney class of $\xi$. 
\item[{(B)}] If the bundle $\xi$ is orientable then $$\secat(\dot \xi) \ge \h(\e(\xi)).$$
\item[{(C)}] Moreover, if $\xi$ is orientable and the base $B$ is a CW-complex whose dimension satisfies
$$\dim B \le q \cdot \h(\e(\xi))+q,$$
then 
\begin{eqnarray*}
\secat (\dot \xi) = \h(\e(\xi)). 
\end{eqnarray*}
\end{enumerate}
\end{lemma}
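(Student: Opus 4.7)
My plan is to handle (A) and (B) with Schwarz's cohomological lower bound for sectional category, and to handle (C) with a fibrewise join construction and obstruction theory.

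For the lower bounds in (A) and (B) I would invoke the classical result of \cite{S}: if classes $\alpha_1,\dots,\alpha_k \in H^*(B)$ (with arbitrary coefficients) all lie in $\ker(\dot\xi^*)$ and their product is nonzero, then $\secat(\dot\xi) \ge k$. So it suffices to put $\w_q(\xi)$ (mod $2$) and $\e(\xi)$ (integrally, using orientability) into $\ker(\dot\xi^*)$. This is immediate from the Gysin sequence of the sphere bundle, which reads, in the relevant range,
\begin{equation*}
H^{0}(B) \xrightarrow{\,\cdot\, \e(\xi)\,} H^{q}(B) \xrightarrow{\ \dot\xi^*\ } H^{q}(\dot E),
\end{equation*}
so that $\dot\xi^*(\e(\xi))=0$; the same argument with $\Z_2$-coefficients and $\w_q(\xi)$ replacing $\e(\xi)$ yields (A). Taking the $h$-th power with $h=\h(\e(\xi))$ (resp.\ $h=\h(\w_q(\xi))$) and applying Schwarz's criterion gives $\secat(\dot\xi)\ge h$.

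For the upper bound in (C), set $h=\h(\e(\xi))$ and use the Schwarz characterization: $\secat(\dot\xi)\le h$ iff the $(h+1)$-fold fibrewise join $\dot\xi * \dot\xi * \cdots * \dot\xi \to B$ admits a global section. The fibre of this iterated join is $S^{q-1} * \cdots * S^{q-1} = S^{q(h+1)-1}$, which is $(q(h+1)-2)$-connected, and the bundle is again orientable with Euler class equal to $\e(\xi)^{h+1}$ (multiplicativity of the Euler class under fibrewise join of oriented sphere bundles). I would now run the standard obstruction theory for sections on the CW-complex $B$: no obstruction can occur in degrees $i < q(h+1)$ because $\pi_{i-1}(S^{q(h+1)-1})=0$; the primary obstruction in $H^{q(h+1)}(B;\Z)$ is precisely $\e(\xi)^{h+1}$, and it vanishes by the definition of $h$. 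Since the dimension hypothesis $\dim B\le q(h+1)$ ensures there are no cells of dimension $>q(h+1)$, no further obstructions arise, so a global section of the $(h+1)$-fold join exists and $\secat(\dot\xi)\le h$. Combined with (B) this gives the equality.

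The main obstacle is the upper bound: specifically, the identification of the primary obstruction of the iterated fibrewise join with $\e(\xi)^{h+1}$, and the verification that the hypothesis $\dim B\le q\h(\e(\xi))+q$ is exactly the condition that cuts off obstruction theory right after the primary obstruction. Parts (A) and (B) are essentially formal once Schwarz's cohomological criterion and the Gysin sequence are in hand.
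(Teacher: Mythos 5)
Your proof is correct and follows essentially the same route as the paper: Schwarz's cohomological lower bound for (A) and (B), and the fibrewise-join characterization of sectional category plus primary-obstruction theory (with the join identified as the sphere bundle of the Whitney sum $(h+1)\xi$, so that the primary obstruction is $\e(\xi)^{h+1}$) for (C). The only cosmetic difference is that you derive $\dot\xi^\ast(\e(\xi))=0$ and its mod-2 analogue from the Gysin sequence, whereas the paper uses the tautological section of the pulled-back bundle over $\dot E$; the two arguments are standard and interchangeable.
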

\begin{proof} (A) First we observe that $\dot\xi ^\ast(\w_{q}(\xi)) =0\in H^q(\dot E; \Z_2)$. 
Indeed, using functoriality of the Stiefel-Whitney classes, we see that $\dot\xi^\ast(\w_q(\xi))$ is the top Stiefel - Whitney class of the induced fibration $\dot \xi^\ast(\xi)$ over $\dot E(\xi)$. However this fibration admits a nonzero section $s(x)=x$ where $x\in\dot E(\xi)$. 
The top Stiefel - Whitney class of a vector bundle having a section vanishes, hence $\xi^\ast(\w_q(\xi))=
\w_q(\dot\xi^\ast(\xi))=0$. 

Finally we apply the general cohomological lower bound for the sectional category, see \cite{S}, Theorem 4; this gives $\secat(\dot \xi) \ge \h(\w_q(\xi))$. 

(B) follows similarly. 

To prove (C) we apply Theorem 3 of Schwarz \cite{S} which identifies the sectional category of $\dot \xi$ with the smallest number 
$k$ such that the $(k+1)$-fold fiberwise join $(\dot\xi )^{\ast(k+1)}$ admits a continuous section. 
Note that 
$(\dot\xi )^{\ast(k+1)}$ is fiberwise homeomorphic to the unit sphere bundle of the vector bundle $(k+1)\xi=\xi\oplus\xi\oplus\dots\oplus \xi$, the Whitney sum of $k+1$ copies of $\xi$. The obstructions for a section of $(\dot\xi) ^{\ast(k+1)}$ lie in the groups 
$$H^i(B; \pi_{i-1}(S^{q(k+1)-1})), \quad i=1, 2, \dots.$$
The first obstruction (with $i=q(k+1)$) equals $$\e(\xi)^{k+1} \, =\, \e((k+1)\xi)\, \, \in H^{q(k+1)}(B).$$ 
Taking $k=\h(\e(\xi))$ we obtain $\e(\xi)^{k+1}=0$, i.e. the first obstruction vanishes. 
The further obstructions (with $i=q(k+1)+j$ where $j=1, \, 2, \dots $) also vanish because of our assumption 
$\dim B \le q \cdot (\h(\e(\xi))+1)$. This completes the proof. 
\end{proof}
\begin{example}\label{ex2}
Let $\eta: E\to \CP^n$ denote the canonical complex line bundle over $\CP^n$. We shall view $\eta$ as a rank 2 real vector bundle. Its Euler class $\e(\eta)$ is the generator of $H^2(\CP^n)$ and $\h(\e(\eta))=n$. 
Since $\dim \CP^n =2n \le 2\cdot(n+1)$, Lemma \ref{lm1} (C) applies and gives $\secat (\eta)=n$. 
\end{example} 
\begin{example}
Let $\eta: E\to \CP^n$ be as in the previous Example. 
For $k\le n$ consider $\xi_k=k\eta= \eta\oplus\eta\oplus\dots\oplus \eta$, the Whitney sum of $k$ copies of $\eta$. 
We have $\rk(\xi_k) = 2k$, and $\e(\xi_k)=\e(\eta)^k$ implying $\h(\e(\xi_k))= \lfloor n/k\rfloor$. The inequality 
$\dim B=2n \le 2k\cdot (\lfloor n/k\rfloor +1)$ is satisfied and using Lemma \ref{lm1} we obtain 
\begin{eqnarray}\label{floor}
\secat (\xi_k) \, = \lfloor n/k\rfloor
\end{eqnarray} for any
$k=1, 2, \dots, n$. Formula (\ref{floor}) is also true for $k>n$ as then the bundle $\xi_k$ admits a section and hence $\secat(\xi_k)=0$. 
\end{example}

\begin{remark} There is a version of statement (C) of Lemma \ref{lm1} for non-orientable bundles; in this case the Euler class lies in the cohomology $e(\xi)\in H^q(B; \tilde \Z)$ with twisted coefficients and its powers $e(\xi)^k$ lie in the
groups $H^{kq}(B, (\tilde \Z)^{\otimes k})$. It is easy to see that $(\tilde \Z)^{\otimes k}=\Z $ for $k$ even and 
$(\tilde \Z)^{\otimes k}=\tilde \Z$ for $k$ odd. 
\end{remark}

\section{Parametrized topological complexity}\label{sec3}

In this section we briefly recall the notion of parametrized topological complexity which was recently introduced in \cite{CFW},  \cite{CFW2}. It is a generalization of the concept of topological complexity of robot motion planning problem introduced in \cite{F}; see also \cite{F2}. 

Let $X$ be a path-connected topological space viewed as the space of states of a mechanical system. The motion planning problem of robotics asks for an algorithm which takes as input an initial state and a desired state of the system, and produces as output a continuous motion of the system from the initial state to the desired state, see \cite{LV}. 
That is, given $(x_0, x_1)\in X\times X$, the algorithm will produce a continuous path $\gamma: I\to X$ with 
$\gamma(0)=x_0$ and $\gamma(1)=x_1$, where $I=[0,1]$ denotes the unit interval.

Let $X^I$ denote the space of all continuous paths in $X$, equipped with the compact-open topology. 
The map $\pi: X^I \to X\times X$, where $\pi(\gamma) =(\gamma(0), \gamma(1))$,  is a fibration in the sense of Hurewicz.  A solution of the motion planning problem, {\it a motion planning algorithm}, is a section of this fibration, i.e. a map $s: X\times X \to X^I$ with $\pi\circ s =\id_{X\times X}$. 
If $X$ is not contractible, no section can be continuous, see \cite{F}.
{\it The topological complexity} of $X$ is defined to be the sectional category, or Schwarz genus, of the fibration 
$\pi: X^I \to X\times X$; notation: $\TC(X) =\secat(\pi)$. In other words, $\TC(X)$ is the smallest integer $k$ for 
 which there exists an open cover $X\times X = U_0\cup U_1\cup \dots\cup U_k$ such that 
  the fibration $\pi$ admits a continuous section $s_j : U_j \to X^I $ for each $j=0, 1, \dots, k$. 
 
In the parametrized setting developed in \cite{CFW}, one assumes that the motion of the system is constrained 
by external conditions, such as obstacles or variable geometry of the containing domain. 
 The initial and terminal states of the system, as well as the motion between them, must live under the same external conditions.  
 
This situation is modelled by a fibration $p: E \to  B$, with path-connected fibers, where the base $B$ is a topological space encoding the variety of external conditions. For $b \in B$, the fiber 
 $X_b = p^{-1}(b)$ is viewed as the space of achievable configurations of the system given the constraints imposed by $b$. {\it A parametrized motion planning algorithm} takes as input initial and terminal states of the system (consistent with external conditions $b$), and produces a continuous  path between them, achievable under external conditions $b$. 
 The initial and terminal points, as well as the path between them, all lie within the same fiber $X_b$. 
 
 To define {\it the parametrized topological complexity} of the fibration $p: E \to B$ one needs to introduce  
 the associated fibration $\Pi: E^I_B \to E \times_B E,$ where $E \times_B E$ is the space of all pairs of configurations lying in the same fiber of $p$, while $E_B^I$ stands for the space of continuous paths in $E$ lying in a single fiber of $p$; the map $\Pi$ sends a path to its endpoints.
 \begin{definition}\label{def1}
 The parametrized topological complexity  
 of the fibration $p: E \to B$ is defined as the sectional category of the fibration 
 \begin{eqnarray}\label{fib}
 \Pi: E_B^I \to E \times_B E, \quad \Pi(\gamma)=(\gamma(0), \gamma(1)).\end{eqnarray}
In more detail, $$\TC[p:E\to B]:=\secat(\Pi:E_B^I \to E\times_B E)$$ is the minimal integer $k$ such that 
$E \times_B E$ admits an open cover $E \times_B E=U_0\cup U_1\cup \dots\cup U_k$ with the property that each set $U_i$ admits a continuous section of $\Pi$, where $i=0, 1, \dots, k$.
\end{definition}

Note that $\Pi: E_B^I \to E \times_B E$ is a Hurewicz fibration assuming that $p: E\to B$ is a Hurewicz fibration, see \cite{CFW2}, Proposition 2.1. 

If $B'\subset B$ is a subset and $E'=p^{-1}(B')\subset E$, then the topological complexity $\TC[p':E'\to B']$  of the restricted fibration (where $p'=p|_{E'}$) clearly satisfies $$\TC[p':E'\to B'] \le \TC[p:E\to B].$$ In particular, we obtain the inequality
\begin{eqnarray}\label{fibre}
\TC(X) \le \TC[p:E\to B]  
\end{eqnarray}
where $X$ is the fibre of $p:E\to B$. 

\begin{lemma}\label{lmtriv}
Let $p: E\to B$ be a locally trivial fibration with fibre $X$. (A) If $\TC[p:E\to B]=0$ then  $X$ is contractible. 
(B) Conversely, if the the fibre $X$ is contractible and the base $B$ is paracompact then there exists a globally defined continuous parametrized motion planning algorithm $s: E\times_BE \to E^I_B$ and therefore $\TC[p:E\to B]=0$. 
\end{lemma}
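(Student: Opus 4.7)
Part (A) follows quickly from the general inequality (\ref{fibre}). The plan is: taking $B' = \{b\}$ for any $b \in B$ gives $E' = p^{-1}(b) \cong X$, so $\TC[p:E\to B] = 0$ forces $\TC(X) = 0$. The standard characterisation (see \cite{F}) that $\TC$ of a path-connected space vanishes iff the space is contractible then yields the conclusion.

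For Part (B), my plan has two steps. Step one: construct a fibrewise strong deformation retraction of $E$ onto a section $\sigma: B \to E$, i.e.\ a continuous map $H: E \times I \to E$ satisfying $p \circ H(e, t) = p(e)$, $H(e, 0) = e$, and $H(e, 1) = \sigma(p(e))$. Step two: given such an $H$, define
\[
s(e_1, e_2)(t) = \begin{cases} H(e_1, 2t), & 0 \le t \le 1/2, \\ H(e_2, 2 - 2t), & 1/2 \le t \le 1. \end{cases}
\]
This is well-defined at $t = 1/2$ because $p(e_1) = p(e_2)$ forces $\sigma(p(e_1)) = \sigma(p(e_2))$, and each path $s(e_1, e_2)$ lies inside a single fibre of $p$ because $H$ is fibrewise. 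Continuity is immediate, and $\Pi \circ s = \id_{E\times_B E}$, so $s$ is a global section and $\TC[p:E\to B] = 0$.

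Step one is the technical heart. Choose a trivialising open cover $\{U_\alpha\}$ of $B$ and a contraction of $X$ to a basepoint $x_0 \in X$; transporting the contraction through each trivialisation $p^{-1}(U_\alpha) \cong U_\alpha \times X$ yields a local fibrewise contraction $H_\alpha$ whose terminal value defines a local section $\sigma_\alpha$. Paracompactness of $B$ provides a partition of unity $\{\phi_\alpha\}$ subordinate to $\{U_\alpha\}$, and one patches the $H_\alpha$ into a global $H$ by concatenating reparametrised local contractions in an order and schedule governed by $\{\phi_\alpha\}$ -- since homotopies cannot be averaged linearly, concatenation replaces convex combination. This is essentially the standard Dold-type argument for numerable fibre bundles with contractible fibre. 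The main obstacle is precisely this patching: checking continuity across the transition regions where the $\phi_\alpha$ switch between zero and positive values. All other steps, including the reduction of (A) to the classical fact about $\TC$ and the deduction of the section $s$ from $H$, are formal.
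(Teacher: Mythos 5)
Your proposal is correct and follows essentially the same route as the paper: part (A) is the identical reduction to $\TC(X)=0$ via the restriction inequality (\ref{fibre}) and the classical fact from \cite{F}, and part (B) uses the very same concatenation formula once a fibrewise deformation of $E$ onto a section is in hand. The only difference is that your ``Step one'' is precisely Corollary 3.2 of Dold \cite{D} (a numerable fibre bundle with contractible fibre over a paracompact base is shrinkable), which the paper simply cites rather than reproving, so the delicate partition-of-unity patching you flag as the main obstacle can be outsourced to that reference.
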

\begin{proof}
If $\TC[p:E\to B]=0$ then $\TC(X)=0$ because of (\ref{fibre}). By Theorem 1 from \cite{F} this implies that $X$ is contractible. To prove (B) we shall apply Corollary 3.2 from the paper of A. Dold \cite{D}. It implies that a locally trivial fibre bundle $p: E\to B$ with paracompact base and contractible fibre is {\it shrinkable}; this means that there exists a continuous section $\sigma: B \to E$ and a homotopy $H: E\times I\to E$ such that for any $e\in E$ one has
$H(e, 0)=e$, $H(e, 1)=\sigma p(e)$ and $p(H(e,t))=p(e)$. We may define the section 
$s: E\times_BE \to E^I_B$ by the formula
\begin{eqnarray}\label{s}
s(e, e')(t) = \left\{
\begin{array}{lll}
H(e, 2t), &\mbox{for}& 0\le t\le 1/2,\\ \\
H(e', 2-2t), &\mbox{for}& 1/2\le t\le 1,
\end{array}
\right.
\end{eqnarray}
where $(e, e')\in E\times_BE$ and $t\in I$. Since $H(e, 1)=\sigma p(e) = \sigma p(e') = H(e', 1)$, we see that 
both parts of the formula (\ref{s}) match and hence $s$ is continuous. We clearly have $s(e, e')(0)=e$ and $s(e, e')(1)=e'$. Besides, $p(s(e, e')(t)) = p(e) = p(e')$, i.e. $s$ is a continuous 
parametrized motion planning algorithm. 
\end{proof}

Next we mention the upper and lower bounds for the parametrized topological complexity established in \cite{CFW}. 

\begin{prop} [Proposition 7.2 from \cite{CFW}]\label{prop1} Let $p: E \rightarrow  B$ be a locally trivial fibration with fiber $X$, where the spaces 
$E, B, X$ are CW-complexes. Assume that the fiber $X$ is r-connected. Then
\begin{eqnarray}
\TC[p:E\to B] <  \frac{2\dim X+\dim B+1}{r+1}.\end{eqnarray}
\end{prop}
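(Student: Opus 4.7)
My plan is to combine Schwarz's join characterization of sectional category with a connectivity--obstruction argument. By definition $\TC[p:E\to B]=\secat(\Pi)$, where $\Pi\colon E_B^I\to E\times_B E$ sends a path to its pair of endpoints; its fiber over any $(x,x')$ lying in a common $X_b=p^{-1}(b)$ is the path space from $x$ to $x'$ in $X_b$, which is homotopy equivalent to $\Omega X$ since $X$ is path-connected. By Theorem~3 of \cite{S}, $\secat(\Pi)\le k$ if and only if the $(k+1)$-fold fiberwise join $\Pi^{\ast_B(k+1)}$ admits a continuous section over $E\times_B E$, so it suffices to produce such a section for a suitably small $k$.

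Next I would compute the connectivity of the fiber of this join, which is the iterated topological join $(\Omega X)^{\ast(k+1)}$. Since $X$ is $r$-connected, $\Omega X$ is $(r-1)$-connected; applying the standard identity $\mathrm{conn}(A\ast B)=\mathrm{conn}(A)+\mathrm{conn}(B)+2$ repeatedly yields
$$\mathrm{conn}\bigl((\Omega X)^{\ast(k+1)}\bigr)\,=\,(k+1)(r-1)+2k\,=\,(k+1)(r+1)-2.$$

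Then I would run classical obstruction theory on the CW base $E\times_B E$. Because $p$ is a locally trivial bundle over a CW-complex with CW fiber, $E\times_B E\to B$ is a fiber bundle with fiber $X\times X$, so $E\times_B E$ inherits a CW structure satisfying $\dim(E\times_B E)\le\dim B+2\dim X$. The obstructions to a section of $\Pi^{\ast_B(k+1)}$ lie in the groups $H^i\bigl(E\times_B E;\,\pi_{i-1}((\Omega X)^{\ast(k+1)})\bigr)$ and vanish automatically whenever $i-1\le(k+1)(r+1)-2$. Hence a section exists as soon as
$$(k+1)(r+1)-1\,\ge\,\dim B+2\dim X,\qquad\text{i.e.,}\qquad k+1\,\ge\,\frac{2\dim X+\dim B+1}{r+1}.$$
Choosing $k$ to be the smallest integer satisfying this inequality gives $k<(2\dim X+\dim B+1)/(r+1)$, which is the claimed bound on $\TC[p:E\to B]$.

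The step I expect to demand the most care is the CW--obstruction setup for $E\times_B E$: one must verify the dimension estimate and check that the local coefficient systems $\pi_{i-1}((\Omega X)^{\ast(k+1)})$ on $E\times_B E$ present no obstacle to the vanishing argument. Both points follow from the local triviality of $p$ and the high connectivity of the join, but they are the technical heart of the argument; once they are in hand everything reduces to the two crisp ingredients above, namely Schwarz's join theorem and the iterated-join connectivity formula.
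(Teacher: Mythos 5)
Your argument is correct and is essentially the proof given for Proposition 7.2 in \cite{CFW}: one applies Schwarz's dimension--connectivity bound to the fibration $\Pi$ with fiber $\Omega X$ (which is $(r-1)$-connected) over the base $E\times_B E$ of dimension at most $\dim B+2\dim X$, via the join/obstruction-theory mechanism you describe. Your connectivity computation for $(\Omega X)^{\ast(k+1)}$ and the final arithmetic yielding the strict inequality are both right.
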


\begin{prop}[Proposition 7.3 from \cite{CFW}]\label{prop2}  Let $p: E \rightarrow  B$ be a fibration with path-connected fiber. Consider the diagonal map $\Delta : E \rightarrow  E \times_B E$, where $\Delta (e) = (e,e)$. Then the parametrized topological complexity $\TC[p: E \rightarrow  B]$ is greater than or equal to the 
cup-length of the kernel $\ker[\Delta^\ast  : H^\ast (E \times_B E; R) \rightarrow  H^\ast (E; R)]$, where $R$ is an arbitrary coefficient ring. 
\end{prop}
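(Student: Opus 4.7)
My plan is to deduce Proposition \ref{prop2} from the general cohomological lower bound of Schwarz (Theorem 4 of \cite{S}) applied to the fibration $\Pi: E_B^I \to E\times_B E$, whose sectional category equals $\TC[p:E\to B]$ by Definition \ref{def1}. Schwarz's theorem guarantees that for any coefficient ring $R$, the cup-length of the kernel $\ker\Pi^\ast \subset H^\ast(E\times_B E;R)$ is a lower bound for $\secat(\Pi)$. Hence the task reduces to identifying $\ker\Pi^\ast$ with $\ker\Delta^\ast$ as subsets of $H^\ast(E\times_B E;R)$.

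The crucial observation is that the inclusion $c:E\hookrightarrow E_B^I$ sending a point $e$ to the constant path at $e$ is a homotopy equivalence fitting into the factorization $\Delta=\Pi\circ c$. The endpoint projection $p_0:E_B^I\to E$, $\gamma\mapsto\gamma(0)$, serves as a two-sided homotopy inverse: one has $p_0\circ c=\id_E$, while the fibrewise reparametrization $H_s(\gamma)(t)=\gamma(st)$ gives a homotopy from $c\circ p_0$ (at $s=0$) to $\id_{E_B^I}$ (at $s=1$). Granting that $c^\ast$ is therefore an isomorphism on cohomology, from $c^\ast\circ\Pi^\ast=\Delta^\ast$ one concludes $\ker\Pi^\ast=\ker\Delta^\ast$, and the cohomological bound then produces exactly the claimed inequality.

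The one point requiring care is to verify that the reparametrization $H_s$ actually lands inside $E_B^I$ and is continuous in the compact-open topology. This is immediate: if $\gamma$ takes all its values in a single fibre $X_{p(\gamma(0))}$, then so does its reparametrization $t\mapsto\gamma(st)$, so $H_s$ preserves the constraint defining $E_B^I$; continuity of $(\gamma,s)\mapsto H_s(\gamma)$ follows by the standard exponential-law argument. Once this has been checked the proof is entirely formal and parallels the classical derivation of the cohomological lower bound for ordinary $\TC(X)$, with $E\times_B E$ in place of $X\times X$ and $\Delta:E\to E\times_B E$ in place of the usual diagonal. The main obstacle, modest as it is, lies precisely in confirming that the contraction of paths to their initial points can be performed in a manner that respects the fibrewise structure of $p$.
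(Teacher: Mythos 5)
Your argument is correct and is the standard one: the paper itself does not prove this statement but simply quotes it as Proposition 7.3 of \cite{CFW}, and your derivation (Schwarz's cohomological lower bound for $\secat(\Pi)$ combined with the observation that the constant-path inclusion $c:E\to E_B^I$ is a fibrewise-compatible homotopy equivalence satisfying $\Delta=\Pi\circ c$, hence $\ker\Pi^\ast=\ker\Delta^\ast$) is precisely the expected reconstruction of that proof. All the points you flag as needing care (the reparametrization homotopy staying inside $E_B^I$, continuity in the compact-open topology) are checked correctly.
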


\begin{prop}\label{prop3}
[Proposition 4.7 from \cite{CFW}] If $p: E \rightarrow  B$ is a locally trivial fibration, and the spaces $E $ and $B$ are metrizable separable ANRs, then in Definition \ref{def1}, instead of open covers one may use arbitrary covers of $E \times_B E$ or, equivalently, arbitrary partitions
$$E\times_BE=F_0\sqcup F_1\sqcup \cdot \cdot \cdot \sqcup F_k, \quad F_i\cap F_j=\emptyset ,\quad i\not=j$$
admitting continuous sections $s_i : F_i \rightarrow  E_B^I$, where $i = 0, 1, . . . , k.$
\end{prop}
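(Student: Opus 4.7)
The plan is to reduce the statement to a classical theorem of Schwarz (\cite{S}) which asserts that the sectional category of a Hurewicz fibration over a sufficiently nice base can be computed equally well using open covers, arbitrary covers, or disjoint partitions. Given this, the real work is to verify that the fibration $\Pi:E_B^I\to E\times_B E$ of Definition \ref{def1} meets the appropriate niceness hypotheses, namely that its base $E\times_B E$ is a separable metrizable ANR while $\Pi$ itself is a Hurewicz fibration.

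First I would record that $\Pi$ is a Hurewicz fibration, which is Proposition 2.1 of \cite{CFW2}. Next I would verify that $E\times_B E$ is a separable metrizable ANR. This follows from local triviality of $p:E\to B$: each $b\in B$ has a trivializing open neighborhood $U$ with $p^{-1}(U)\cong U\times X$, and over this $U$ one has $(E\times_B E)|_U\cong U\times X\times X$. Since $E$ is a separable metrizable ANR, the fibre $X$ is such an ANR, hence so is the local model $U\times X\times X$. Hanner's theorem, saying that a metrizable space that is locally an ANR is an ANR, then upgrades this to the global conclusion.

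The core step is then the standard thickening construction. Given a disjoint partition $E\times_B E=F_0\sqcup F_1\sqcup\cdots\sqcup F_k$ together with continuous sections $s_i:F_i\to E_B^I$ of $\Pi$, one uses the ANR property of the path space $E_B^I$ (which arises locally as $U\times X^I$) to extend each $s_i$ to a continuous map $\tilde s_i:V_i\to E_B^I$ on an open neighborhood $V_i$ of $F_i$ in $E\times_B E$. The composition $\Pi\circ\tilde s_i:V_i\to E\times_B E$ agrees with $\id_{V_i}$ on $F_i$ but generally not on all of $V_i$; one corrects this by exhibiting a short homotopy from $\Pi\circ\tilde s_i$ to $\id_{V_i}$ (shrinking $V_i$ if necessary), provided by the local contractibility built into the ANR property of $E\times_B E$, and lifting it along $\Pi$ to a homotopy starting at $\tilde s_i$. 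The endpoint of this lift is a genuine section $\sigma_i:V_i\to E_B^I$, and the family $\{V_i\}$ is the sought-for open cover. The equivalence of arbitrary covers with disjoint partitions is immediate in the other direction by replacing a cover with its successive set-theoretic differences, on each of which the section restricts.

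The main obstacle in this argument is producing the correcting homotopy uniformly on an entire open set $V_i$ rather than merely pointwise, because only then does the lifted homotopy assemble into a single continuous section. This is precisely the content that the separable metric ANR hypothesis contributes: two continuous maps into a metric ANR that are pointwise sufficiently close are related by a small canonical homotopy, and one exploits this together with the homotopy lifting property of $\Pi$ to complete the construction. Combined with Schwarz's theorem, this delivers $\secat(\Pi)\le k$ from the existence of any partition of size $k+1$ admitting sections, which is the inequality to be proved.
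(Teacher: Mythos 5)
Your overall architecture --- verify that $E\times_B E$ is a separable metrizable ANR and that $\Pi$ is a Hurewicz fibration, then invoke a general principle letting one compute sectional category over such a base with arbitrary covers --- is the right one, and your ANR verification (local triviality gives local models $U\times X\times X$, the fibre is an ANR as a retract of the open ANR $p^{-1}(U)\subset E$, and Hanner's theorem globalizes) is correct; this is how \cite{CFW} proceeds. The gap is in the general principle itself. First, it is not a theorem of Schwarz: the results in \cite{S} compare open covers with \emph{closed} covers over normal spaces, and the passage to completely arbitrary subsets is exactly the content of the paper of Garc\'{\i}a-Calcines \cite{JGC}, which is what \cite{CFW} (and, elsewhere, the present paper) actually cites. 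Second, the direct argument you substitute for it fails at the thickening step. The neighborhood extension property of an ANR target applies to maps defined on \emph{closed} subsets of a metrizable space; a continuous map on an arbitrary subset need not extend to any open neighborhood. For example, with $\alpha$ irrational, the function $f:\mathbb{Q}\cap[0,1]\to\mathbb{R}$ given by $f(x)=\sum_{n:\ \alpha/n<x}2^{-n}$ is continuous on its domain, but every open set $V\supseteq \mathbb{Q}\cap[0,1]$ contains all but finitely many of the jump points $\alpha/n$, so $f$ admits no continuous extension to $V$ even though $\mathbb{R}$ is an ANR. Since the $F_i$ in the statement are genuinely arbitrary --- not closed, not locally compact --- this is not a removable technicality; it is the entire difficulty the proposition addresses.

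What your argument does prove is the classical weaker statement in which each $F_i$ is assumed to be an ENR (or a neighborhood retract): then one has a retraction $r_i:V_i\to F_i$ of an open neighborhood, $s_i\circ r_i$ is a globally defined map on $V_i$, and your small correcting homotopy plus the homotopy lifting property of $\Pi$ turn it into a section --- this is the argument in \cite{F2}. To obtain the proposition as stated you should instead quote \cite{JGC} after your (correct) ANR verification, or reproduce its argument, which does not attempt to extend the sections $s_i$ themselves over a neighborhood.
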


\begin{example}\label{ex10} As an illustration consider the canonical complex line bundle $\eta: E\to \CP^n=B$ viewed as a real rank 2 vector bundle. 
The unit sphere bundle $\dot\eta: \dot E(\eta)\to \CP^n$ is a principal $S^1$-bundle, its total space $\dot E(\eta)$ is the sphere $S^{2n-1}$, the set of unit vectors $z\in \C^n$. 
The unit circle $S^1\subset \C$ acts by multiplication, this action is free and the quotient is $\CP^n$. We claim that 
\begin{eqnarray}\label{hopf}
\TC[\dot\eta: \dot E(\eta)\to \CP^n]=1.\end{eqnarray} 
Indeed, using (\ref{fibre}) we get 
$\TC[\dot\eta: \dot E(\eta)\to \CP^n]\ge \TC(S^1)=1$. To obtain the inverse inequality we consider the following partition 
$$\dot E(\eta)\times_B\dot E(\eta)= F_0\sqcup F_1$$
where $F_0$ is the set of all pairs $(z_1, z_2)$ of unit vectors $z_1, z_2\in S^{2n-1}$ lying in the same fibre but not antipodal, i.e. $z_1\not= -z_2$; the set $F_1$ is the set of antipodal pairs $(z, -z)$. 
For $(z_1, z_2)\in F_0$ we can write $z_2/z_1 = e^{i\phi}$ where $\phi\in (-\pi, \pi)$ and a continuous section $s_0$ of the fibration (\ref{fib}) over $F_0$ can be defined as follows: 
\begin{eqnarray*}
s_0(z_1, z_2)(t) \, =\, e^{i\phi t}z_1, \quad t\in [0,1].
\end{eqnarray*}
On the other hand, over $F_1$ we can define a continuous section $s_1$ where 
\begin{eqnarray}\label{rotate}
s_1(z, -z)(t) \, =\, e^{i\pi t}\cdot z, \quad t\in [0,1].
\end{eqnarray}
This proves (\ref{hopf}).

\end{example}

This example is a special case of a more general statement that the parametrized topological complexity of any principal bundle equals the Lusternik - Schnirelmann category of the fibre, see Proposition 4.3 in \cite{CFW}. 

The main result of \cite{CFW} is the computation of the parametrized topological complexity of the Fadell - Neuwirth fibration which, in term of robotics, can be understood as the complexity of controlling multiple robots in the presence of multiple movable obstacles. 

\section{The cup-length associated with a section}

Material of this section will play an auxiliary role in the sequel. We shall use notations introduced in the beginning of \S \ref{s1}. 


Let $\xi: E\to B$ be an oriented vector bundle of rank $q\ge 2$ equipped with scalar product structure $\langle \, , \, \rangle$.  
As above, let $\dot\xi: \dot E\to B$  denote the unit sphere bundle; its fibre is homeomorphic to $S^{q-1}$.  
In this section we shall assume that the fibration $\dot \xi: \dot E\to B$ has a continuous section $s: B\to \dot E$ and our goal will be to identify the kernel 
\begin{equation}\label{kernel}
\ker [s^\ast: H^\ast(\dot E)\to H^\ast(B)]
\end{equation} 
and its {\it cup-length}, i.e. the length of the longest nontrivial products of elements of this kernel. This result will be used in the following sections to estimate the parametrized topological complexity from below. 

We shall use the following remark. The oriented sphere fibration $\dot \xi: \dot E\to B$ admits a cohomological extension of the fibre (see \cite{Sp}, chapter 5, \S 7) if and only if its Euler class vanishes, $\e(\xi)=0$. In particular, any oriented sphere fibration with trivial Euler class $\e(\xi)=0$ satisfies the conclusion of the Leray - Hirsch theorem, see \cite{Sp}.

Let $\dot F\subset \dot E$ denote the set $\{e\in \dot E; \, e\perp s(\xi(e))\}$; it is the set of unit vectors perpendicular to the section. The projection 
$$\eta: \, \dot F \to B, \quad \eta=\dot\xi|_{\dot F}$$
is an oriented bundle of $(q-2)$-dimensional spheres. Let $\e(\eta) \, \in\,  H^{q-1}(B)$ denote the Euler class of $\eta$. The mod-2 reduction of the class $\e(\eta)$ equals the Stiefel - Whitney class $\w_{q-1}(\xi)\in H^{q-1}(B; \Z_2)$ of $\xi$. 

\begin{theorem} \label{section}
The cup-length of the kernel (\ref{kernel}) equals $\h(\e(\eta))+1$ where $\h(\e(\eta))$ denotes the height of the Euler class
$\e(\eta)\in H^{q-1}(B)$.   
\end{theorem}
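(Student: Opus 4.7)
The plan is to apply the Leray--Hirsch theorem to describe $H^\ast(\dot E)$ as a free $H^\ast(B)$-module of rank $2$, identify the kernel of $s^\ast$ as a principal ideal, and then compute cup-powers in that ideal via a multiplicative relation of the form $v^2=\pm\dot\xi^\ast(\e(\eta))\cdot v$. Concretely, the nowhere-zero section $s$ forces $\e(\xi)=0$, so by the remark recalled just before the theorem $\dot\xi$ admits a cohomological extension of the fibre, and Leray--Hirsch supplies a class $v\in H^{q-1}(\dot E)$ whose restriction to every fibre generates $H^{q-1}(S^{q-1})$, together with an $H^\ast(B)$-module decomposition
\[H^\ast(\dot E)\;\cong\;H^\ast(B)\cdot 1\;\oplus\;H^\ast(B)\cdot v.\]
Replacing $v$ by $v-\dot\xi^\ast(s^\ast v)$ I may assume $s^\ast v=0$, and then $s^\ast$ is the projection onto the first summand, so $\ker s^\ast=H^\ast(B)\cdot v$ as an ideal of $H^\ast(\dot E)$.

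The crucial step is to establish the multiplicative relation
\[v^2\;=\;\pm\,\dot\xi^\ast(\e(\eta))\cdot v\quad\text{in } H^{2(q-1)}(\dot E).\]
The section $s$ induces an orthogonal splitting $\xi=\epsilon\oplus\xi^\perp$, where $\epsilon$ is the trivial line subbundle spanned by $s$ and $\xi^\perp$ is the oriented rank $q-1$ complement bundle whose unit sphere bundle coincides with $\eta\colon\dot F\to B$, so $\e(\xi^\perp)=\e(\eta)$. Fibrewise $\dot E$ is the union $D^+\cup_{\dot F}D^-$ of the two closed hemisphere bundles meeting along $\dot F$, and orthogonal projection onto $\xi^\perp$ identifies each $D^\pm$ with the disk bundle $D(\xi^\perp)$ so that $\pm s(B)$ correspond to the zero sections. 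Via excision, the Thom class of $\xi^\perp$ determines a class $u\in H^{q-1}(\dot E,D^+)$ whose image in $H^{q-1}(\dot E)$ restricts to a fibre generator and vanishes on $s(B)\subset D^+$; by the Leray--Hirsch uniqueness this image equals $\pm v$. The standard self-cup identity $u\cdot u=\dot\xi^\ast(\e(\xi^\perp))\cdot u$ in $H^\ast(\dot E,D^+)$ then transports to $H^\ast(\dot E)$ via the forgetful map and delivers the displayed relation.

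With the relation in hand, an easy induction gives $v^k=\pm\,\dot\xi^\ast(\e(\eta))^{k-1}\cdot v$ for every $k\ge 1$. An arbitrary product of $k$ elements of $\ker s^\ast=H^\ast(B)\cdot v$ has the form $(\alpha_1 v)\cdots(\alpha_k v)=\pm(\alpha_1\cdots\alpha_k)\cdot\dot\xi^\ast(\e(\eta))^{k-1}\cdot v$ with $\alpha_i\in H^\ast(B)$, and since $\{1,v\}$ is an $H^\ast(B)$-basis of $H^\ast(\dot E)$, this is nonzero if and only if $(\alpha_1\cdots\alpha_k)\cdot\e(\eta)^{k-1}\ne 0$ in $H^\ast(B)$. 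Choosing $\alpha_1=\cdots=\alpha_k=1$ reduces the existence of a nonzero product of length $k$ to $\e(\eta)^{k-1}\ne 0$, equivalently $k\le\h(\e(\eta))+1$, while larger $k$ are impossible because then $\e(\eta)^{k-1}=0$ forces every such product to vanish. Hence the cup-length of $\ker s^\ast$ equals $\h(\e(\eta))+1$.

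The main obstacle in this plan is the multiplicative identity in step two: the geometric set-up is routine, but one must carefully match the normalized Leray--Hirsch class $v$ with the image of the Thom class of $\xi^\perp$ under the hemisphere identification, and transport the self-cup relation for the Thom class from the relative group $H^\ast(\dot E,D^+)$ to the absolute group $H^\ast(\dot E)$. The signs are irrelevant for the height calculation, which is why it is harmless to keep a $\pm$ throughout.
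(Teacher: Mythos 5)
Your proof is correct and follows essentially the same route as the paper's: Leray--Hirsch with a fibre class normalized against the section, identification of $\ker s^\ast$ as the principal ideal generated by that class, the self-cup relation $v^2=\pm\,\dot\xi^\ast(\e(\eta))\smile v$ obtained from the Thom class of the orthogonal complement bundle via the hemisphere decomposition, and induction on powers. The only cosmetic difference is that the paper normalizes its fundamental class $U$ to vanish on the hemisphere opposite to $s(B)$ (so that $s^\ast U=\e(\eta)$) and then passes to the generator $x_0=U-\dot\xi^\ast(\e(\eta))$, whereas you normalize directly so that $s^\ast v=0$.
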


\begin{proof} Let $U\in H^{q-1}(\dot E)$ denote a fundamental class: for any $b\in B$ the restriction $U|_{\dot E_b}$ is the fundamental class of the fibre $\dot E_b$. By the Leray - Hirsch theorem every cohomology class in $H^\ast(\dot E)$ has a unique representation in the form $$\xi^\ast(u)+\xi^\ast(v)\smile U,$$ where $u, v\in H^\ast(B)$. 

Let $W, W'\subset\dot E$ denote the following subsets: 
$$W=\{e\in \dot E; \langle e, s(\xi(e))\rangle \ge 0\}\quad \mbox{and}\quad
W'=\{e\in \dot E; \langle e, s(\xi(e))\rangle \le 0\}.$$ Clearly $W\cup W'=\dot E$ and $W\cap W'=\dot F$. 
One can identify $W$ with the unit disc bundle of the sphere bundle $\dot F$. 
Therefore the quotient $\dot E/W'= W/\dot F$ can be naturally identified with the Thom space of the fibration 
$\eta$. 

Next we observe that the fundamental class $U\in H^{q-1}(\dot E)$ can be chosen such that $U|_{W'}=0$. 
Indeed, starting with an arbitrary choice  $U'$ we can replace it by 
$U=U'-\xi^\ast(x)$ where $x\in H^{q-1}(B)$ is such that $(\xi|_{W'})^\ast(x) = U'|_{W'}$. Here we use the observation that $\xi|_{W'}: W' \to B$ is a homotopy equivalence and hence the class $x$ mentioned above exists and is unique. With this choice clearly $U|_{W'}=0$.

Once the fundamental class $U$ satisfies $U|_{W'}=0$ we have the following formulae which 
fully describe the multiplicative structure of $H^\ast(\dot E)$:
\begin{eqnarray}\label{two}
s^\ast(U)= \e(\eta) \, \in\,  H^{q-1}(B)
\end{eqnarray}
and
\begin{eqnarray}\label{one}
U\smile U = \xi^\ast(\e(\eta)))\smile U\, \in \, H^{2(q-1)}(\dot E).
\end{eqnarray} 
To prove (\ref{one}) 
we note that 
$U|_W=0$ implies that the class $U$ can be refined to a relative class $\tilde U\in H^{q-1}(\dot E, W')
= H^{q-1}(\dot E/W')=H^{q-1}(W/\dot F)$. We already mentioned that the quotient 
$\dot E/W'$ can be identified with the Thom space of the vector bundle having $\eta$ as its unit sphere bundle. Examining the long exact sequence in cohomology
$$\dots\to H^{q-2}(\dot E) \stackrel{\simeq}\to H^{q-2}(W')\to H^{q-1}(\dot E, W')\to H^{q-1}(\dot E)$$
 we see that the refinement $\tilde U$ is unique and coincides with the Thom class. 
 Now, by the definition (see \cite{MS}, \S 9), we have 
 \begin{eqnarray}\label{11}
  \e(\eta) \, =\,  s^\ast(\tilde U|_{W})\, =\,  s^\ast(U),
  \end{eqnarray} 
 which proves (\ref{two}). 
From (\ref{11}) we also obtain
$$\tilde U\smile \tilde U= (\tilde U|_W)\smile \tilde U = \xi^\ast(\e(\eta))\smile \tilde U\, \, \in \, \, H^{2(q-1)}(\dot E, W').$$
Applying 
the restriction homomorphism $H^{2(q-1)}(\dot E, W') \to H^{2(q-1)}(\dot E)$
to both sides of this equality 
gives (\ref{one}). 
%

Note that the order of the factors in the RHS of formula (\ref{one}) is irrelevant: if $q$ is odd then the classes commute and for $q$ is even the Euler class
$\e(\eta)$ has order two. 

Consider now an arbitrary class $x\in H^\ast(\dot E)$ satisfying $s^\ast(x)=0$. We can write 
$$x=\xi^\ast(\alpha) +\xi^\ast(\beta)\smile U \quad \mbox{with}\quad \alpha, \,  \beta\in H^\ast(B).$$
Applying $s^\ast$ and using (\ref{two}) we get
\begin{eqnarray}\label{three}
s^\ast(x)\, =\, 0\, =\, \alpha +\beta\smile \e(\eta). 
\end{eqnarray}
Conversely, any two classes $\alpha, \, \beta$ satisfying (\ref{three}) produce a class $x=\xi^\ast(\alpha) +\xi^\ast(\beta)\smile U$ lying in the kernel of $s^\ast$. 
A particular choice $\alpha=-\e(\eta)$ and $\beta=1$ gives the class $$x_0= U-\xi^\ast(\e(\eta))\, \in H^{q-1}(\dot E).$$
Using (\ref{one}) we have
$$x_0^2=U^2 - 2\xi^\ast(\e(\eta))\smile U + \xi^\ast(\e(\eta))^2 = - \xi^\ast(\e(\eta))\smile x_0$$
and we obtain by induction 
\begin{eqnarray}
x_0^n &=& (-1)^{n-1} \xi^\ast(\e(\eta)^{n-1})\smile x_0\\
\nonumber
&=& (-1)^{n}\xi^\ast(\e(\eta)^{n}) + (-1)^{n-1} \xi^\ast(\e(\eta))^{n-1}\smile U. 
\end{eqnarray}
For $n=h(\e(\eta))+1$ the class $x_0^n$ equals $(-1)^{n-1} \xi^\ast(\e(\eta))^{n-1}\smile U$ and is
obviously nonzero (as follows from the Leray - Hirsch theorem). This implies that the cup-length of the kernel $\ker s^\ast$ is at least $\h(\e(\eta))+1$. 

If $x=\xi^\ast(\alpha)+\xi^\ast(\beta)\smile U\, \in \, H^\ast(\dot E)$ is an arbitrary class with $s^\ast(x)=0$ then
$\alpha =-\beta\smile \e(\eta)$ (see above) and thus $x= \xi^\ast(\beta)\smile x_0$. In other words, the 
kernel $\ker s^\ast\subset H^\ast(\dot E)$ is the principal ideal generated by the class $x_0$. We see that the cup-length of the kernel 
equals the highest nonzero power of $x_0$ which, as we have shown above, is $\h(\e(\eta))+1$. 
\end{proof}

The following Corollary is an analogue of Theorem \ref{section} where we use $\Bbb Z_2$ coefficients. The role of the Euler class plays the top Stiefel - Whitney class of the bundle $\eta$ of vectors orthogonal to the section. The advantage of this statement is that the answer is given in terms of the original bundle $\xi$ and its characteristic class $\w_{q-1}(\xi)\in H^{q-1}(B;\Bbb Z_2)$. 

\begin{corollary}\label{cor11}
Let $\xi: E\to B$ be a rank $q\ge 2$ vector bundle (not necessarily orientable). Let $s: B \to \dot E$ be a continuous section of the unit sphere bundle. Then the cup-length of the kernel $\ker[s^\ast: H^\ast(\dot E; \Z_2) \to H^\ast(B;\Z_2)]$ equals 
$\h(\w_{q-1}(\xi))+1$. 
\end{corollary}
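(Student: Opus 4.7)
The plan is to repeat the proof of Theorem \ref{section} verbatim with $\Z_2$ coefficients, then translate the resulting mod-$2$ Euler class into a Stiefel--Whitney class of the original bundle $\xi$.

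First I note that the orientability hypothesis in Theorem \ref{section} was used only to guarantee the existence of an integral Thom/Euler class for $\xi$ and for the rank $(q-1)$ subbundle of vectors orthogonal to $s$. With $\Z_2$ coefficients, the Thom isomorphism and the mod-$2$ Euler class are available for \emph{any} real vector bundle, and the mod-$2$ Euler class of a sphere bundle coincides with the top Stiefel--Whitney class of the corresponding vector bundle. Hence the entire argument of Theorem \ref{section} --- defining $W,W'\subset\dot E$, choosing the fundamental class $U\in H^{q-1}(\dot E;\Z_2)$ so that $U|_{W'}=0$, identifying $\dot E/W'$ with the $\Z_2$--Thom space of the orthogonal complement bundle, and deriving the two key relations $s^\ast(U)=\e(\eta)$ and $U\smile U=\xi^\ast(\e(\eta))\smile U$ --- carries over word for word with coefficients in $\Z_2$. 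Here $\eta:\dot F\to B$ is the unit sphere bundle of $\xi'$, the rank $(q-1)$ orthogonal complement of the trivial line subbundle spanned by $s$, and $\e(\eta)\in H^{q-1}(B;\Z_2)$ denotes its mod-$2$ Euler class.

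Next, the same algebraic manipulation as in Theorem \ref{section} shows that $\ker[s^\ast:H^\ast(\dot E;\Z_2)\to H^\ast(B;\Z_2)]$ is the principal ideal generated by $x_0=U-\xi^\ast(\e(\eta))$, and that its cup-length equals $\h(\e(\eta))+1$.

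Finally I translate $\e(\eta)$ into a characteristic class of $\xi$. The section $s$ produces a splitting $\xi\cong \epsilon^{1}\oplus \xi'$ as metric bundles, where $\epsilon^1$ is the trivialized line bundle generated by $s$ and $\xi'$ has rank $q-1$. By the Whitney sum formula and $\w(\epsilon^1)=1$,
\begin{equation*}
\w(\xi)=\w(\epsilon^1)\smile\w(\xi')=\w(\xi'),
\end{equation*}
so in particular $\w_{q-1}(\xi)=\w_{q-1}(\xi')$. Since $\e(\eta)\in H^{q-1}(B;\Z_2)$ is exactly the top Stiefel--Whitney class of $\xi'$, we conclude $\e(\eta)=\w_{q-1}(\xi)$, and the cup-length of $\ker s^\ast$ equals $\h(\w_{q-1}(\xi))+1$, as required.

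I do not expect genuine difficulty: the only thing to verify carefully is that each step in the proof of Theorem \ref{section} that used integrality or orientability has a valid $\Z_2$ analogue, which it does because the mod-$2$ Thom class exists unconditionally. The mildly delicate step is the identification $\e(\eta)=\w_{q-1}(\xi)$, where one must remember to use the Whitney sum formula on the metric splitting induced by $s$ rather than trying to apply functoriality directly to $\eta$.
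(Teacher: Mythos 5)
Your proposal is correct and follows essentially the same route as the paper: rerun the proof of Theorem \ref{section} with $\Z_2$ coefficients (where the Thom class exists without orientability), identify the kernel as the principal ideal generated by $x_0=U-\xi^\ast(\w_{q-1}(\eta))$, and use the splitting $\xi=\eta\oplus\epsilon$ with the Whitney sum formula to get $\w_{q-1}(\eta)=\w_{q-1}(\xi)$. Your extra care in checking that each orientability-dependent step has a mod-$2$ analogue is a welcome elaboration of what the paper leaves implicit.
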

\begin{proof}
One repeats the arguments of the proof of Theorem \ref{section} replacing the integer coefficients by $\Z_2$. 
The bundle $\eta: \, F \to B, \quad \eta=\xi |_{F}$ is the bundle of vectors orthogonal to the section, i.e. 
$F= \{e\in E; \, e\perp s(\xi(e))\}$ and the arguments of the proof of Theorem \ref{section} show that 
the the kernel $\ker[s^\ast: H^\ast(\dot E; \Z_2) \to H^\ast(B;\Z_2)]$ is the principal ideal generated by the class 
$x_0=U-\xi^\ast(\w_{q-1}(\eta))$. The height of $x_0$
equals one plus the height of the class 
$\w_{q-1}(\eta)$. 
However, $\xi=\eta\oplus \epsilon$ where $\epsilon$ is the trivial line bundle 
determined by the section and therefore $\w_{q-1}(\eta) =\w_{q-1}(\xi)$ and the result follows.
\end{proof}

\section{Parametrized topological complexity of sphere bundles}

Let $\xi: E\to B$ be an oriented rank $q\ge 2$ vector bundle equipped with fibrewise scalar product. 
Let $\dot\xi: \dot E \to B$ denote the unit sphere bundle; its fibre is the sphere of dimension $q-1$. 
Our goal is to estimate the parametrized topological complexity ${\sf TC}[\dot\xi: \dot E\to B]$. 
By Proposition \ref{prop1} we have an upper bound
\begin{eqnarray}\label{upper12}
{\sf TC}[\dot\xi: \dot E\to B] < 2 +\frac{\dim B+1}{q-1}.
\end{eqnarray}

To state our result, consider the bundle 
\begin{eqnarray}\label{ddot}
\ddot\xi: \ddot E\to \dot E,
\end{eqnarray} 
where $\ddot E\subset \dot E\times_B \dot E$ is the space of pairs of mutually orthogonal unit vectors $(x, y)\in \dot E\times_B \dot E$, \, $x\perp y$. The projection
$\ddot \xi$ acts as $\ddot \xi(x, y)=x.$ The map (\ref{ddot}) is an oriented locally trivial fibration with fibre sphere of dimension $q-2$. Consider its Euler class
\begin{eqnarray}\label{eu}
\e(\ddot \xi) \in H^{q-1}(\dot E). 
\end{eqnarray}
An obvious property of the class $\e(\ddot \xi)$ is that for any point $b\in B$ the restriction $\e(\ddot\xi)|_{\dot E_b}$ is the Euler class of the tangent bundle of the sphere $\dot E_b$. 

\begin{remark}\label{rk13}
A section $s$ of bundle (\ref{ddot}) associates with a unit vector $e\in \dot E(\xi)$ a unit vector $s(e)$ which is perpendicular to $e$ and the integer $\secat (\ddot \xi)$ is a measure of complexity of construction such a section $s$ globally, i.e. over all $\dot E$. In particular, $\secat (\ddot \xi)=0$ if the vector bundle $\xi$ admits a complex structure: in this case one can define the section by $s(e) = \sqrt{-1}\cdot e$. 
\end{remark}


\begin{theorem} \label{thm1} 
 One has the estimates
\begin{eqnarray}\label{estim} \label{15}
\h(\e(\ddot\xi))+1 \, \le \, {\sf TC}[\dot\xi: \dot E\to B]\le 
\secat(\ddot \xi)+1.\end{eqnarray} 
Moreover, if $B$ is a CW-complex satisfying $\dim B\le (q-1) \cdot  \h(\e(\ddot\xi))$ then 
\begin{eqnarray}\label{eq2}\label{16}
{\sf TC}[\dot\xi: \dot E\to B]\, =\,  \h(\e(\ddot\xi))+1 \, =\,  \secat(\ddot \xi) +1.
\end{eqnarray}
\end{theorem}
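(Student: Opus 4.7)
The plan is to prove the two inequalities in (\ref{15}) separately and then invoke Lemma \ref{lm1}(C) to make them coincide under the dimension hypothesis.

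For the \emph{lower bound}, I observe that the pullback $\dot\xi^\ast(\xi)$ is a rank-$q$ oriented vector bundle over $\dot E$ whose unit sphere bundle is precisely the first projection $p_1: \dot E\times_B \dot E\to \dot E$, and the diagonal $\Delta:\dot E\to \dot E\times_B\dot E$, $\Delta(x)=(x,x)$, is the tautological section of $p_1$. The associated bundle of unit vectors orthogonal to $\Delta$ is exactly $\ddot\xi:\ddot E\to \dot E$. Theorem \ref{section} therefore computes the cup-length of $\ker[\Delta^\ast:H^\ast(\dot E\times_B\dot E)\to H^\ast(\dot E)]$ as $\h(\e(\ddot\xi))+1$, and Proposition \ref{prop2} converts this directly into the inequality $\TC[\dot\xi:\dot E\to B]\ge \h(\e(\ddot\xi))+1$.

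For the \emph{upper bound}, set $k=\secat(\ddot\xi)$ and pick an open cover $\dot E=V_0\cup\cdots\cup V_k$ with continuous sections $\sigma_i:V_i\to\ddot E$. I would then build an open cover of $\dot E\times_B\dot E$ by $k+2$ members admitting continuous sections of $\Pi$, namely
$$U_{-1}=\{(x,y)\in \dot E\times_B\dot E\, :\,y\not=-x\},\qquad U_i=\{(x,y)\, :\, x\in V_i,\ y\not=x\},\ i=0,\dots,k.$$
These are open and cover $\dot E\times_B\dot E$ because any pair $(x,-x)\not\in U_{-1}$ has $x\not=-x$, so $x\in V_i$ for some $i$. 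On $U_{-1}$ the normalized chord $s\mapsto ((1-s)x+sy)/|(1-s)x+sy|$ is a short great-circle arc from $x$ to $y$. On $U_i$ I would concatenate two pieces: on $s\in[0,1/2]$ travel from $x$ to $-x$ along the half great-circle $\cos(2\pi s)\,x+\sin(2\pi s)\,\sigma_i(x)$, and on $s\in[1/2,1]$ travel from $-x$ to $y$ via the normalized chord, which is well defined because $y\not=x$ prevents the denominator from vanishing. The two legs match at $s=1/2$ since both equal $-x$, yielding $\TC[\dot\xi:\dot E\to B]\le k+1=\secat(\ddot\xi)+1$.

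For the equality (\ref{16}) I would apply Lemma \ref{lm1}(C) to the oriented rank-$(q-1)$ vector bundle over $\dot E$ realizing $\ddot\xi$ as its unit sphere bundle. Since $\dim\dot E=\dim B+q-1$, the hypothesis $\dim B\le (q-1)\,\h(\e(\ddot\xi))$ rearranges exactly to $\dim\dot E\le (q-1)(\h(\e(\ddot\xi))+1)$, the numerical input of Lemma \ref{lm1}(C), giving $\secat(\ddot\xi)=\h(\e(\ddot\xi))$. Combined with the two-sided estimate (\ref{15}), this sandwiches $\TC[\dot\xi:\dot E\to B]$ at the common value $\h(\e(\ddot\xi))+1$. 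The main obstacle I expect is the upper bound construction: the naive short-arc section degenerates precisely on the antidiagonal $\{y=-x\}$, and the role of $\sigma_i$ is to provide a continuous normal direction that resolves this ambiguity; verifying continuity of the two-piece section across $s=1/2$ and confirming that the clause $y\not=x$ is sufficient to keep the second-leg denominator nonzero constitute the only delicate points, the remainder being a repackaging of earlier results.
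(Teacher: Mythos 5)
Your proposal is correct, and the lower bound and the final equality are argued exactly as in the paper: Proposition \ref{prop2} applied to the diagonal, Theorem \ref{section} identifying the cup-length of $\ker\Delta^\ast$ as $\h(\e(\ddot\xi))+1$ via the orthogonal-complement bundle $\ddot\xi$, and Lemma \ref{lm1}(C) applied to the rank-$(q-1)$ bundle over $\dot E$ with the dimension count $\dim\dot E=\dim B+q-1$. Where you genuinely diverge is the upper bound. The paper keeps the single open set $U=\{(e,e'):e'\neq -e\}$ and then covers the closed antidiagonal $V=\{(e,-e)\}$ by the non-open sets $p_1^{-1}(A_i)\cap V$; since these are not open in $\dot E\times_B\dot E$, the paper must invoke Proposition \ref{prop3}, which requires $E$ and $B$ to be metrizable separable ANRs. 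You instead enlarge each antidiagonal piece to a bona fide open set $U_i=\{(x,y):x\in V_i,\ y\neq x\}$ and repair the degeneracy of the chord formula by concatenating the half great circle $x\leadsto -x$ through $\sigma_i(x)$ with the chord from $-x$ to $y$, which is nonsingular precisely because $y\neq x$. Your covering argument is sound (a pair outside $U_{-1}$ has $y=-x\neq x$, so it lands in some $U_i$; the two legs match at $s=1/2$; joint continuity plus the exponential law gives continuity into $\dot E^I_B$), and it buys something concrete: the upper bound is established with honest open sets, so it holds under the plain open-cover definition of $\secat$ without the ANR hypothesis hidden in the paper's appeal to Proposition \ref{prop3}. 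The trade-off is a slightly more elaborate section on each $U_i$ (a two-leg path for every pair, not just antipodal ones), whereas the paper's sections on $V$ are the simpler single rotations (\ref{sec18}).
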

\begin{proof}
Consider the diagonal map $\Delta: \dot E \to \dot E\times_B \dot E$ and apply Proposition \ref{prop2}; we obtain that the parametrized topological complexity ${\sf TC}[\dot\xi: \dot E\to B]$ is greater than or equal to the cup-length of the kernel
$\ker [\Delta^\ast: H^\ast(\dot E\times_B \dot E) \to H^\ast(\dot E)]$. 
However, $\Delta$ is a section of the sphere fibration 
$\dot E\times_B \dot E\to \dot E$ given by projection on the first vector; hence we may apply Theorem \ref{section} which describes the cup-length of the kernel of the induced map. The bundle of vectors perpendicular to the section is exactly the bundle $\ddot\xi$. 
By Theorem \ref{section} the cup-length of the kernel $\ker [\Delta^\ast: H^\ast(\dot E\times_B \dot E) \to H^\ast(\dot E)]$ equals $\h(\e(\ddot \xi))+1$. This gives the lower bound in (\ref{15}). 

To prove the right inequality in (\ref{estim}) consider the set $U\subset \dot E\times_B\dot E$ consisting of pairs $(e, e')\in \dot E\times_B \dot E$ with $e\not= -e'$. Over $U$, we can define a continuous motion planning algorithm 
$s: U\to \dot E_B^I$
by setting
\begin{eqnarray}\label{notantipodal}
s(e, e')(t) = \frac{(1-t)e +te'}{||(1-t)e +te'||}, \quad t\in [0,1].
\end{eqnarray}

In view of Proposition \ref{prop3} it remains to construct a motion planning algorithm over the complementary set 
$V=\{(e, -e); \, e\in \dot E\}\subset \dot E\times_B\dot E.$ Denote  by
$p_1, p_2: V\to \dot E$ the projections $p_1(e, -e)=e$ and $p_2(e, -e)=-e .$

Consider again the bundle $\ddot\xi: \ddot E\to \dot E$ and suppose that $A\subset \dot E$ is a subset such that the bundle $\ddot \xi$ admits a continuous section $s_A: A \to \ddot E$ over $A$. 
Using this section we may construct a section $s'_A$ of the fibration 
$$\Pi: \dot E_B^I \to \dot E\times_B \dot E$$
 over the set $p_1^{-1}(A)$ as follows: 
\begin{eqnarray}\label{sec18}
s'_A(e, -e)(t) \, =\, \cos\left(t\pi\right)\cdot e \, + \, \sin\left(t\pi\right)\cdot s_A(e), \quad t\in [0,1].
\end{eqnarray}

Let $\dot E=A_0\cup A_1\cup \dots\cup A_k$ be an open covering, where
$k=\secat(\ddot \xi)$, with the property that $\ddot\xi: \ddot E\to \dot E$ admits a continuous section over each 
$A_i$. Then the sets $p_1^{-1}(A_i)$ cover $V$ and over each of these sets the fibration $\Pi$ admits a continuous section. Thus we get an inequality
$ {\sf TC}[\dot\xi: \dot E\to B]\le \secat(\ddot\xi)+1$.

Finally we apply Lemma \ref{lm1} which claims that the sectional category of $\ddot\xi$ equals 
$\h(\e(\ddot \xi))$ under an additional assumption that $\dim \dot E \le (q-1)\cdot \h(\e(\ddot \xi)) +(q-1)$ which is equivalent to 
$\dim B\le (q-1)\cdot \h(\e(\ddot \xi))$. Hence under this assumption we obtain
$ {\sf TC}[\dot\xi: \dot E\to B] \, =\,  \h(\e(\ddot \xi))+1\, =\, \secat(\ddot\xi)+1$.
This completes the proof. 
\end{proof}

%

\begin{corollary}\label{cor15}
For a vector bundle $\xi: E\to B$ satisfying $\secat (\ddot \xi ) =0$ one has $\TC[\dot \xi: \dot E\to B]=1.$ 
\end{corollary}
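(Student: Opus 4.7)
The plan is to show that the hypothesis forces $\TC[\dot\xi:\dot E\to B]$ to lie in the one-element interval $[1,1]$. The upper bound will come directly from Theorem~\ref{thm1}, and the matching lower bound from the general fibre estimate~\eqref{fibre}, so the argument is essentially bookkeeping on top of what has already been proved.

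First I will feed the hypothesis $\secat(\ddot\xi)=0$ into the right-hand inequality of~\eqref{estim}, namely $\TC[\dot\xi:\dot E\to B]\le \secat(\ddot\xi)+1$. This immediately yields $\TC[\dot\xi:\dot E\to B]\le 1$, with no further work needed; in particular I do not need the sharper equality clause of Theorem~\ref{thm1} nor any dimensional hypothesis on $B$.

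For the matching lower bound I will apply inequality~\eqref{fibre}, which says that $\TC(X)\le \TC[\dot\xi:\dot E\to B]$ for $X$ the fibre. Here the fibre is $S^{q-1}$, and the standing assumption of this section is $q\ge 2$, so the fibre is a sphere of positive dimension and hence not contractible. By Lemma~\ref{lmtriv}(A), a fibration with non-contractible fibre has parametrized topological complexity at least $1$, so $\TC[\dot\xi:\dot E\to B]\ge \TC(S^{q-1})\ge 1$. Combining with the upper bound gives the claimed equality. I do not anticipate any genuine obstacle: the only point requiring attention is the tacit use of $q\ge 2$, which is exactly what prevents the fibre from collapsing to a point and keeps the lower bound non-trivial.
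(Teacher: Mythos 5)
Your proof is correct and follows essentially the same route as the paper: the upper bound comes from the right-hand inequality of (\ref{estim}) with $\secat(\ddot\xi)=0$, and the lower bound from (\ref{fibre}) applied to the fibre sphere. The only cosmetic difference is that you derive $\TC(S^{q-1})\ge 1$ from non-contractibility of the fibre, whereas the paper simply quotes the known value of the topological complexity of the sphere; both are valid.
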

\begin{proof}
The inequality (\ref{15}) gives 
$\TC[\dot \xi: \dot E\to B]\le 1.$ On the other hand, $\TC[\dot \xi: \dot E\to B]\ge \TC(S^{2r-1})=1$ by (\ref{fibre}).
\end{proof}

\begin{example}\label{ex15}
Consider the canonical rank 2 bundle $\xi$ over $\CP^n$ as in Example \ref{ex10}. In this case 
$\dot E(\xi)=S^{2n-1}$ and 
the bundle
$\ddot\xi: \ddot E\to \dot E$ is the trivial bundle with fibre $S^0$, i.e. $$\secat (\ddot\xi)=0=\h(\e(\ddot \xi)).$$ 
We obtain from (\ref{estim}) that ${\sf TC}[\dot \xi: S^{2n-1} \to \CP^n]=1$ confirming the result of Example
\ref{ex10}. 
\end{example}

Generalising Example \ref{ex15} we may state:

\begin{corollary}  For any vector bundle $\xi: E\to B$ 
of even rank $\rk(\xi)=2r$ admitting a complex structure, one has
$$\TC[\dot \xi: \dot E\to B]=1 = \TC(S^{2r-1}).$$
\end{corollary}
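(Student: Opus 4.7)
The plan is to reduce this corollary to the previous one (Corollary \ref{cor15}) by exhibiting a global section of the bundle $\ddot\xi$, thereby showing $\secat(\ddot\xi)=0$. The existence of such a section was already flagged in Remark \ref{rk13}, so the work is essentially to verify that the ``multiplication by $\sqrt{-1}$'' construction genuinely lands in $\ddot E$.

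First I would fix a complex structure $J : E \to E$ on $\xi$ compatible with the fibrewise scalar product, i.e. such that $\langle Jv, Jw\rangle = \langle v,w\rangle$ for all $v,w$ in a common fibre. (If the given metric is not Hermitian with respect to $J$, it can be averaged against $J$ to produce one that is, without changing the underlying sphere bundle up to fibrewise homeomorphism, which is all that matters for parametrized topological complexity.) Then I would define
\begin{equation*}
s : \dot E \longrightarrow \ddot E, \qquad s(e) = (e, Je).
\end{equation*}
Continuity is clear since $J$ is a bundle map, and $s$ is a section of $\ddot\xi$ because $\ddot\xi(e,Je) = e$.

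Next I would check that $s$ lands in $\ddot E$. Unitarity of $J$ gives $|Je| = |e| = 1$, so $Je \in \dot E$, and $Je$ lies in the same fibre as $e$ since $J$ preserves fibres. The orthogonality $e \perp Je$ follows from the standard identity $\langle Je, e\rangle = \langle J^2 e, Je\rangle = -\langle e, Je\rangle$, which forces $\langle e, Je\rangle = 0$. Hence $(e, Je) \in \ddot E$, as required.

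With $s$ a globally defined continuous section of $\ddot\xi$, we get $\secat(\ddot\xi) = 0$. Since a complex structure forces the real rank to be even, say $2r$, the fibre of $\dot\xi$ is the odd-dimensional sphere $S^{2r-1}$, and Corollary \ref{cor15} applies to yield $\TC[\dot\xi:\dot E\to B]=1=\TC(S^{2r-1})$. No step here is really the ``hard part''; the only small subtlety is ensuring the metric used to define $\dot E$ can be taken $J$-invariant, which is handled by the averaging remark above.
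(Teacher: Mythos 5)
Your proposal is correct and follows the same route as the paper: the paper cites Remark \ref{rk13} (the section $s(e)=\sqrt{-1}\cdot e$ of $\ddot\xi$) to conclude $\secat(\ddot\xi)=0$ and then invokes Corollary \ref{cor15}. You merely spell out the verification that $Je$ is a unit vector orthogonal to $e$ (and the $J$-invariant metric caveat), which the paper leaves implicit.
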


\begin{proof} In this case $\secat(\ddot\xi)=0$ (by Remark \ref{rk13}) and the result follows from Corollary \ref{cor15}.
\end{proof}


\begin{remark}
Introducing the bundle $\ddot\xi$ over $\dot E$ and using its sectional category to estimate the parametrized topological complexity we made an approximation of the space of paths on the sphere connecting a pair of antipodal points by the sphere of one dimension below. This sphere is however only the first term in the 
James' construction $JS^{q-2}$, see \cite{J},  which gives a CW complex having the homotopy type of this space of paths. 
\end{remark}

Note that for $q$ even the Euler class $\e(\ddot\xi)\in H^{q-1}(\dot E)$ has order 2, i.e. $2\cdot \e(\ddot\xi)=0$. 
We shall focus below on the case when $q$ odd. Compared with Theorem \ref{thm1}, Corollary \ref{cor3} stated below has the advantage of 
dealing with cohomology of the base $B$.

\begin{corollary}\label{cor3} For $q\ge 3$ odd, 
let $\eta: E(\eta)\to B$ be an oriented vector bundle of rank $q-1$. 
Let 
$\xi=\eta\oplus \epsilon$ be the sum where $\epsilon$ is the trivial line bundle over $B$. 
 Then one has
\begin{eqnarray}\label{plus2}
{\sf TC}[\dot \xi: \dot E(\xi)\to B] \ge \h(\e(\eta))+1.
\end{eqnarray}
Moreover, if the height $\h(\e(\eta))$ is even and the integral cohomology of the base $B$ in dimension 
$(q-1)\cdot \h(\e(\eta))$ has no 2-torsion 
 then 
\begin{eqnarray}\label{plus2}
{\sf TC}[\dot \xi: \dot E(\xi)\to B] \ge \h(\e(\eta))+2;
\end{eqnarray}
\end{corollary}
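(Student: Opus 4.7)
The plan is to apply the lower bound $\h(\e(\ddot\xi))+1 \le \TC[\dot\xi:\dot E\to B]$ from Theorem \ref{thm1} after computing $\h(\e(\ddot\xi))$ explicitly. The crucial point is that $\xi=\eta\oplus\epsilon$ carries a canonical unit section $N:B\to\dot E(\xi)$ coming from the trivial summand $\epsilon$, so Theorem \ref{section} applies: there is a preferred fundamental class $U\in H^{q-1}(\dot E(\xi))$ satisfying
\begin{eqnarray*}
N^\ast(U)=\e(\eta),\qquad U\smile U=\xi^\ast(\e(\eta))\smile U,
\end{eqnarray*}
and by Leray--Hirsch every class in $H^\ast(\dot E(\xi))$ has a unique expression $\xi^\ast(u)+\xi^\ast(v)\smile U$.

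Next I would pin down $\e(\ddot\xi)\in H^{q-1}(\dot E(\xi))$ in this decomposition. Writing $\e(\ddot\xi)=\xi^\ast(u)+v\cdot U$ with $u\in H^{q-1}(B)$ and $v\in\Z$, two observations determine the coefficients: first, $N^\ast(\ddot\xi)$ is the bundle of vectors orthogonal to the trivial section, which is precisely $\eta$, so $N^\ast(\e(\ddot\xi))=\e(\eta)$, forcing $u+v\,\e(\eta)=\e(\eta)$; second, the restriction of $\ddot\xi$ to a fibre $\dot E_b\cong S^{q-1}$ is its tangent sphere bundle, whose Euler number equals $\chi(S^{q-1})=2$ since $q-1$ is even. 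Hence $v=2$ and $u=-\e(\eta)$, yielding the key identity
\begin{eqnarray*}
\e(\ddot\xi)\;=\;2U-\xi^\ast(\e(\eta)).
\end{eqnarray*}

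A routine induction using $U^2=\xi^\ast(\e(\eta))\smile U$, together with the commutativity provided by the even degree $q-1$ of $U$, then gives
\begin{eqnarray*}
\e(\ddot\xi)^{2k}&=&\xi^\ast(\e(\eta)^{2k}),\\
\e(\ddot\xi)^{2k+1}&=&2\,\xi^\ast(\e(\eta)^{2k})\smile U\;-\;\xi^\ast(\e(\eta)^{2k+1}).
\end{eqnarray*}
Set $h=\h(\e(\eta))$. In either parity of $h$ the Leray--Hirsch decomposition shows $\e(\ddot\xi)^h\neq 0$ (since $\e(\eta)^h\neq 0$), and Theorem \ref{thm1} delivers $\TC[\dot\xi:\dot E(\xi)\to B]\ge h+1$. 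For the strengthening, when $h$ is even the exponent $h+1$ is odd and $\e(\eta)^{h+1}=0$, so the second formula collapses to $\e(\ddot\xi)^{h+1}=2\,\xi^\ast(\e(\eta)^{h})\smile U$. By Leray--Hirsch and injectivity of $\xi^\ast$, this class is nonzero exactly when $2\,\e(\eta)^{h}\neq 0$ in $H^{(q-1)h}(B)$, which is precisely what the no-2-torsion hypothesis furnishes. Therefore $\h(\e(\ddot\xi))\ge h+1$ and Theorem \ref{thm1} upgrades the bound to $\TC[\dot\xi:\dot E(\xi)\to B]\ge h+2$. The main leverage is the identity $\e(\ddot\xi)=2U-\xi^\ast(\e(\eta))$; the mildly delicate point is keeping Theorem \ref{section}'s normalization of $U$ (arising from the choice $U|_{W'}=0$) consistent throughout, but that is dictated by the proof of that theorem.
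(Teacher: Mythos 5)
Your proposal is correct and follows essentially the same route as the paper: you derive the identity $\e(\ddot\xi)=2U-\dot\xi^\ast(\e(\eta))$ from the fibre restriction (Euler characteristic $2$ of $S^{q-1}$) and the pullback along the section of $\epsilon$, compute the even and odd powers via $U^2=\dot\xi^\ast(\e(\eta))\smile U$, and feed the resulting height of $\e(\ddot\xi)$ into the lower bound of Theorem \ref{thm1}. The handling of the 2-torsion hypothesis in the even-height case matches the paper's argument exactly.
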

%

%

\begin{proof} Consider the Euler class $\e(\ddot \xi)\in H^{q-1}(\dot E(\xi))$. Applying the Leray - Hirsch theorem we see that any class in $H^{q-1}(\dot E(\xi))$ has a unique representation as $\dot\xi^\ast(a)+bU$
where $U\in H^{q-1}(\dot E)$ is a fundamental class, $a\in H^{q-1}(B)$ and $b\in \Z$. Let $s: B\to \dot E(\xi)$ be the section determined by the trivial summand $\epsilon$. We showed in the proof of Theorem \ref{section} that the fundamental class $U$ can be chosen so that 
\begin{eqnarray}\label{two2}
s^\ast(U)=\e(\eta),
\end{eqnarray} see formula (\ref{two}). 
Note that 
\begin{eqnarray}\label{eta}
s^\ast(\ddot\xi) =\eta.
\end{eqnarray}
Besides, 
\begin{eqnarray}\label{thus}
\e(\ddot\xi) =\dot\xi^\ast(a) + 2U, \quad \mbox{for some class}\quad a\in H^{q-1}(B).
\end{eqnarray}
Indeed, the class $\e(\ddot\xi) $
restricted to each fibre $\dot E_b(\xi)$ equals twice the fundamental class of the sphere
$\dot E_b(\xi)\simeq S^{q-1}$ (here we use our assumption that $q$ is odd, and hence the Euler characteristic of $S^{q-1}$ equals 2). 
Applying $s^\ast$ to both sides of equation
(\ref{thus}) we find $s^\ast(\e(\ddot \xi))= \e(s^\ast(\ddot\xi))= \e(\eta)$, and 
$s^\ast(\dot\xi^\ast(a)) =a$ which together with (\ref{two2}) give $a=-\e(\eta).$ 
Therefore we have
\begin{eqnarray}\label{thus2}
\e(\ddot\xi) = - \dot\xi^\ast(\e(\eta)) + 2U. 
\end{eqnarray}
Using $U^2=\dot\xi^\ast(\e(\eta))\smile U$ (see (\ref{one})) we find
$\e(\ddot\xi)^2 = \dot\xi^\ast(\e(\eta)^2)$ and therefore the even and odd powers of the class 
$\e(\ddot\xi)$ are as follows
\begin{eqnarray}\label{26}
\e(\ddot\xi)^{2n} = \dot\xi^\ast(\e(\eta)^{2n})
\end{eqnarray}
and 
\begin{eqnarray}\label{27}
\e(\ddot\xi)^{2n+1} = -\dot\xi^\ast(\e(\eta)^{2n+1}) + 2 \dot\xi^\ast(\e(\eta)^{2n})\smile U.
\end{eqnarray}
%
From formulae (\ref{26}) and (\ref{27}) we see that the height $\h(\e(\ddot\xi))$ either equals to $\h(\e(\eta))$ or it equals $ \h(\e(\eta))+1$; the second possibility happens iff $\h(\e(\eta))$ is even and the group $H^{(q-1)\h(\e(\eta))}(B)$ has no 2-torsion. 

Applying Theorem \ref{thm1} completes the proof. 
\end{proof}

\begin{example}\label{ex11} Consider the situation of Corollary \ref{cor3} in the case when $\eta: E(\eta) \to \CP^n$ is the canonical bundle over the complex projective space as in Example \ref{ex10}. 
Taking $\xi=\eta\oplus\epsilon$  we have  $\rk (\xi) = q= 3$ is odd and $\h(\e(\eta))=n$. 
By Corollary \ref{cor3} we get 
${\sf TC}[\dot\xi: \dot E(\xi)\to \CP^n]\ge n+1$ and moreover for $n$ even
${\sf TC}[\dot\xi: \dot E(\xi)\to \CP^n]\ge n+2$. 
On the other hand, the upper bound (\ref{upper12}) gives 
${\sf TC}[\dot\xi: \dot E(\xi)\to \CP^n]\le n+2$. Thus, we see that 
$${\sf TC}[\dot\xi: \dot E(\xi)\to \CP^n]= n+2$$
for all even $n$. In particular, we see that the parametrized topological complexity of sphere bundles 
can be arbitrarily large. This contrasts the situation with the usual (i.e. unparametrized) topological complexity which takes the values $1$ and $2$ only for spheres. 

 Finally we describe an explicit parametrized motion planning {\bf algorithm} having complexity $n+2$ for the unit sphere bundle 
associated with the vector bundle $\xi=\eta\oplus\epsilon$ over $B=\CP^n$ as considered in Example \ref{ex11}. 
We shall describe a partition 
\begin{eqnarray}
\dot E(\xi)\times_{B}\dot E(\xi)\, =\, F_0\sqcup F_1\sqcup\dots\sqcup F_{n+2}
\end{eqnarray}
and a continuous section $s_i$ of the fibration 
$$\Pi: \dot E(\xi)^I_{B}\, \to \, \dot E(\xi)\times_{B}\dot E(\xi)$$
over each of the sets $F_i$ where $i=0, 1, \dots, n+2$.  

The set $F_0\subset \dot E(\xi)\times_{B}\dot E(\xi)$ will be defined as the set of pairs $(x, y)\in \dot E(\xi)\times_{B}\dot E(\xi)$ with $x\not= -y$.
The section $s_0$ over $F_0$ can be defined by formula (\ref{notantipodal}). 

The unit sphere bundle of the trivial summand $\epsilon$ gives the sections $\pm \sigma: B \to \dot E(\epsilon)\subset \dot E(\xi)$. Let $\dot E(\xi)^\ast$ denote the complement $\dot E(\xi)- \dot E(\epsilon)$.
We define the set 
$F_1\subset \dot E(\xi)\times_{B}\dot E(\xi)$ to be the set of all pairs $(x, -x)$ with $x \in \dot E(\xi)^\ast$.
Let ${\rm pr}: \dot E(\xi)^\ast\to \dot E(\eta) \subset \dot E(\xi)$ denote the retraction 
given by the formula
\begin{eqnarray}\label{pr}
{\rm pr}(x) \, =\, \frac{x-\langle x, \sigma(b)\rangle \cdot \sigma(b)}{||x-\langle x, \sigma(b)\rangle \cdot \sigma(b)||} \quad \mbox{where}\quad b=\xi(x).
\end{eqnarray}
Here the symbol $\langle \, , \, \rangle$ denotes scalar product in the fibre. The deformation 
$$\alpha_t(x)  \, =\, \frac{x- t\cdot \langle x, \sigma(b)\rangle \cdot \sigma(b)}{||x-t \cdot \langle x, \sigma(b)\rangle \cdot \sigma(b)||} \quad \mbox{where}\quad b=\xi(x), \, t\in [0,1],$$
satisfies $\alpha_0(x) =x$ and $\alpha_1(x)={\rm pr}(x)$. The homotopy $t\mapsto (\alpha_t(x), \alpha_t(-x))$ deforms the initial pair $(x, -x)$ to a pair of antipodal points lying in the equatorial sphere $\dot E(\eta)\subset \dot E(\xi)$. Note that the circle $S^1$ acts freely on $\dot E(\eta)$, see Example \ref{ex10}. 
We may define a continuous section $s_1$ over $F_1$ by setting $s_1(x, -x)(t)$ to be the concatenation of the following three paths: (a) the deformation $\alpha_t(x)$, (b) the section (\ref{rotate}) of Example \ref{ex10}, and (c) the reverse of the deformation $\alpha_t(-x)$. In more detail, 
$$s_1(x, -x)(t) =\left\{
\begin{array}{lll}
\alpha_{3t}(x), & \mbox{for} & t\in [0, 1/3],\\ \\
e^{i\pi(3t-1)}\cdot {\rm pr}(x), & \mbox{for} & t\in [1/3, 2/3],\\ \\
\alpha_{3(1-t)}(-x), & \mbox{for} & t\in [2/3, 1].
\end{array}
\right.
$$

Finally we define the sections $s_i: F_i \to \dot E(\xi)^I_B$ for $i=2, 3, \dots, n+2$ as follows. The base $B=\CP^n$ has the well-known cell decomposition $\CP^n=e^0\sqcup e^2\sqcup\dots\sqcup e^{2n}$ with a single cell 
$e^{2i}$ in each even dimension $2i\le 2n$. For $i=2, 3, \dots, n+2$,
let $F_i$ denote the set of pairs $(x, -x)$ with $x=\pm \sigma(b)$ for $b=\xi(x)$ lying in the cell $e^{2(i-2)}$. 
Since the cell $e^{2(i-2)}$ is contractible, the bundle $\eta$ admits a continuous section $\phi_i$ over $e^{2(i-2)}$. Hence we may define the section $s_i: F_i \to \dot E(\xi)^I_B$ by the formula 
$$s_i(x, -x)(t) = \cos(\pi t) \cdot x + \sin(\pi t) \cdot \phi_i(\xi(x)),$$
similarly to (\ref{sec18}). Here we assume that the Euclidean structure on the vector bundle $\xi$ is the orthogonal sum of the Euclidean structures of $\eta$ and $\epsilon$. 
\end{example}

We conclude the paper with the following observations.

Below we always assume that the base $B$ is an ANR. 

\begin{lemma}\label{lm21} Let $\xi: E\to B$ be a vector bundle such that 
$\xi=\eta\oplus \tau$ and $\rk(\tau)\ge 2$. 
Then 
\begin{eqnarray}\label{eneqq}
\hskip 1cm
\secat (\ddot \xi: \ddot E(\xi)\to \dot E(\xi))\, \le \, \secat(\ddot \tau: \ddot E(\tau) \to \dot E(\tau))+  \secat(\dot \tau: \dot E(\tau)\to B) +1\end{eqnarray}
 and consequently
\begin{eqnarray}\label{eneq}
\TC[\dot\xi : \dot E\to B]\le \secat(\ddot \tau: \ddot E(\tau)\to \dot E(\tau) )+
\secat(\dot \tau: \dot E(\tau)\to B) + 2.\end{eqnarray} 
\end{lemma}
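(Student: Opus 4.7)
The second inequality is immediate from the first by Theorem \ref{thm1} (which gives $\TC[\dot\xi:\dot E\to B]\le \secat(\ddot\xi)+1$), so I focus on the first. The plan is to construct an open cover of $\dot E(\xi)$ by $\secat(\ddot\tau)+\secat(\dot\tau)+2$ sets, each admitting a continuous section of $\ddot\xi$. The geometric idea uses $\xi=\eta\oplus\tau$: writing $e=(e_\eta,e_\tau)\in \dot E(\xi)$, any vector of the form $(0,v)$ with $v\in E(\tau)_b$ perpendicular to $e_\tau$ is automatically perpendicular to $e$ in $\xi$, so I can try to produce the required orthogonal vector by working inside the $\tau$-summand.

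The cover will be assembled in two stages. Stage 1 treats the open subset $\dot E(\xi)\setminus \dot E(\eta)$ where $e_\tau\ne 0$: take an open cover $U_0,\ldots,U_k$ of $\dot E(\tau)$ with $k=\secat(\ddot\tau)$ admitting sections $\sigma_i$ of $\ddot\tau$, let $r(e)=e_\tau/\|e_\tau\|$ be the normalization map on $\dot E(\xi)\setminus \dot E(\eta)$, and set $V_i=r^{-1}(U_i)$; then $e\mapsto(e,(0,\sigma_i(r(e))))$ is a continuous section of $\ddot\xi$ over $V_i$. Stage 2 will cover $\dot E(\eta)$ (and some extra). Start with an open cover $W_0,\ldots,W_\ell$ of $B$ with $\ell=\secat(\dot\tau)$ admitting unit sections $\rho_j$ of $\tau$, and define
$$W_j' \;=\; \xi^{-1}(W_j)\,\setminus\,\bigl\{\pm (0,\rho_j(b)) : b\in W_j\bigr\},$$
which is open in $\dot E(\xi)$. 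For $e\in W_j'$, the vectors $e$ and $(0,\rho_j(\xi(e)))$ are linearly independent unit vectors in the fibre $E(\xi)_{\xi(e)}$, so Gram--Schmidt followed by normalization produces a continuous unit vector orthogonal to $e$, giving a section of $\ddot\xi$ over $W_j'$.

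The final verification is that $\dot E(\eta)\subset \bigcup_j W_j'$: any $e=(e_\eta,0)\in \dot E(\eta)$ has $e_\eta\ne 0$, so $e\ne \pm(0,\rho_j(\xi(e)))$, placing $e$ inside $W_j'$ for any $j$ with $\xi(e)\in W_j$. Combined with Stage~1, the sets $V_0,\ldots,V_k,W_0',\ldots,W_\ell'$ form the desired open cover, of cardinality $\secat(\ddot\tau)+\secat(\dot\tau)+2$. The main point of care is Stage~2: the excised ``bad set'' must be closed in $\xi^{-1}(W_j)$ (to keep $W_j'$ open), positioned so that Gram--Schmidt succeeds everywhere on the complement, yet small enough to still contain every point of $\dot E(\eta)\cap \xi^{-1}(W_j)$; the choice $\pm(0,\rho_j(b))$ is exactly the locus where $e$ and $(0,\rho_j(\xi(e)))$ are parallel. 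The hypothesis $\rk(\tau)\ge 2$ is what both makes $\ddot\tau$ a bundle with nonempty fibres in Stage~1 and ensures the orthogonal complement produced in Stage~2 is nonzero.
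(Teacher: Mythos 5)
Your proof is correct, and the underlying geometric decomposition is the same as the paper's: split $\dot E(\xi)$ according to whether the $\tau$-component of a unit vector vanishes, use sections of $\ddot\tau$ (pushed into the $\tau$-summand, where they are automatically orthogonal to all of $e$) on the nonvanishing part, and use sections of $\dot\tau$ over the base on the locus $\dot E(\eta)$. The one genuine difference is technical: the paper works with \emph{partitions} $F_0\sqcup\dots\sqcup F_{k+\ell+1}$ whose pieces are neither open nor closed, and must invoke the result of Garc\'{\i}a-Calcines (hence the standing ANR hypothesis on $B$) to convert this back into a bound on $\secat(\ddot\xi)$. You instead thicken the second family of pieces to genuinely open sets $W_j'$ by excising only the antipodal pair $\pm(0,\rho_j(b))$ rather than restricting to $\dot E(\eta)$ exactly, and recover orthogonality there by Gram--Schmidt; your first family $V_i=r^{-1}(U_i)$ is likewise open. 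This yields an honest open cover realizing the bound directly from the definition of sectional category, so your argument is self-contained and does not need the ANR assumption or the partition lemma. Your closing remark slightly misattributes the role of $\rk(\tau)\ge 2$ in Stage 2 (nonvanishing of the Gram--Schmidt output follows from linear independence alone, which your excision guarantees); the hypothesis is really only needed so that $\ddot\tau$ has nonempty fibres and $\secat(\ddot\tau)$ is finite, but this is a side comment and not a gap.
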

\begin{proof} It is enough to prove the inequality (\ref{eneqq}) as 
 (\ref{eneq}) follows from (\ref{eneqq}) and Theorem \ref{thm1}. 

If $\xi = \eta\oplus \tau$ then for any point of the base $b\in B$
we have $E(\xi)_b=E(\eta)_b \oplus E(\tau)_b$. 
The scalar product can be chosen so that the spaces 
$E(\eta)_b, \, E(\tau)_b\subset E(\xi)_b$ are mutually orthogonal. We shall denote by $P^\eta_b$ and 
$P^\tau_b$ the orthogonal projections of $E(\xi)_b$ onto $E(\eta)_b$ and $E(\tau)_b$ correspondingly. 


Let $k$ denote $\secat(\ddot \tau: \ddot E(\tau)\to \dot E(\tau))$ and let $\ell$ denote
$\secat(\dot \tau: \dot E(\tau)\to B)$. Let $\dot E(\tau)= G_0\sqcup G_1\sqcup \dots\sqcup G_k$ be a partition such that for each $j=0, \dots, k$ there exists a continuous map $\sigma_j: G_j\to \dot E(\tau)$ with the property that for every $e\in G_j$ the vectors $e$ and $\sigma_j(e)$ lie in the same fibre and are perpendicular to each other.  
Besides, let $B_0\sqcup B_1\sqcup \dots\sqcup B_\ell=B$ be a partition of the base $B$ with continuous sections $\nu_i: B_i\to \dot E(\tau)$, where $i=0, \dots, \ell$. 

We want to show that $\dot E(\xi)$ can be partitioned as 
$$\dot E(\xi)= F_0\sqcup F_1\sqcup \dots\sqcup F_k \sqcup F_{k+1}\sqcup \dots\sqcup F_{k+\ell+1}$$
such that there exist continuous sections 
$s_i: F_i\to E(\ddot\xi)$ 
of the fibration $\ddot \xi$. 
 In view of the result of J. M. Garcia-Calcines \cite{JGC}, this is equivalent to $\secat (\ddot \xi)\le k+\ell +1$.

For $i=0, 1, \dots, k$ we set 
$$F_i=\{e\in \dot E(\xi); P_{\xi(e)}^\tau(e)\not=0, \, \mbox{and} \, ||P_{\xi(e)}^\tau(e)||^{-1} \cdot P_{\xi(e)}^\tau(e) \in G_i\}.$$
and for $i=k+1, \dots, k+\ell +1$, we set
$$F_i= \{e\in \dot E(\xi); \, P_{\xi(e)}^\tau(e)=0 \, \, \mbox{and}\, \, \xi(e)\in B_{i-k-1}\}.$$

Next we construct the continuous sections $s_i: F_i\to E(\ddot \xi)$.

For $i=0, 1, \dots, k$, given a unit vector $e\in F_i$, 
consider $e'=P_{\xi(e)}^\tau(e)$ which is a nonzero vector of $E(\tau)_b$, where
$b=\xi(e)$. Then $e''=||e'||^{-1} \cdot e'$ is a unit vector and $\sigma_i(e'')\in \dot E(\tau)_b$ is a unit vector 
satisfying $\sigma_i(e'')\perp e'$. In particular we see that the unit vectors $e, \, \sigma_i(e'')\in E(\xi)_b$ are linearly independent. Hence the unit vector
$$e'''= ||\sigma_i(e'')-\langle \sigma_i(e''), e\rangle \cdot e||^{-1} \cdot (\sigma_i(e'')-\langle \sigma_i(e''), e\rangle \cdot e)$$
 is perpendicular to $e$ and depends continuously on $e$. Thus, we may define the section $s_i$ by setting $s_i(e)=(e, e''')$. 
 
 Next we describe the sections $s_i$ over the sets $F_i$ where $i=k+1, \dots, k+\ell+1$. 
 If $e\in F_i$ then 
 $P_b^{\tau}(e) =0$ and $b=\xi(e)$ lies in $B_{i-k-1}$. In other words, 
 $e\in \dot E(\eta)$ and $b=\xi(e)\in B_{i-k-1}$. The section $\nu_{i-k-1}: B_{i-k-1}\to \dot E(\tau)$ defines a unit vector 
 $\nu_{i-k-1}(b)\in \dot E(\tau)\subset \dot E(\xi)$ which is perpendicular to $e$. Therefore we may define the section 
 $s_i$ of $\ddot \xi$ over $F_i$ by the formula $s_i(e)=(e, \nu_{i-k-1}(e)).$
 
 \end{proof}

%
%
%
%
%

\begin{corollary}
Let $\xi: E\to B$ be a vector bundle such that 
$\xi=\eta\oplus \tau$ where $\tau: E(\tau)\to B$ admits a complex structure and has a nowhere zero continuous section. 
Then
$\TC[\dot\xi : \dot E\to B]\le 2$. 
\end{corollary}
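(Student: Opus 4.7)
The plan is to apply Lemma \ref{lm21} directly, after checking that both sectional category terms on the right-hand side of inequality (\ref{eneq}) vanish under the given hypotheses.

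First I would observe that the nowhere-zero continuous section of $\tau$, once normalized to unit length, yields a continuous global section of the unit sphere bundle $\dot \tau : \dot E(\tau)\to B$. Hence there is an open cover of $B$ by a single set over which $\dot\tau$ admits a section, which gives $\secat(\dot\tau:\dot E(\tau)\to B)=0$.

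Next I would invoke Remark \ref{rk13}: because $\tau$ admits a complex structure, the fibration $\ddot\tau:\ddot E(\tau)\to \dot E(\tau)$ has a canonical global section sending $e\mapsto (e,\sqrt{-1}\cdot e)$, so $\secat(\ddot\tau:\ddot E(\tau)\to \dot E(\tau))=0$ as well. Note that the hypothesis $\rk(\tau)\ge 2$ needed to apply Lemma \ref{lm21} is automatic, since $\tau$ carries a complex structure and is nontrivial (it has a nowhere-zero section, so in particular it is a genuine bundle of positive rank, and the complex structure forces its real rank to be even and at least $2$).

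Substituting these two vanishings into (\ref{eneq}) of Lemma \ref{lm21} gives
\begin{equation*}
\TC[\dot\xi:\dot E\to B]\ \le\ 0 + 0 + 2\ =\ 2,
\end{equation*}
as required. I do not foresee a genuine obstacle here; the content of the corollary is really just recording the two situations in which the two sectional-category summands in Lemma \ref{lm21} vanish simultaneously.
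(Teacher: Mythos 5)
Your proposal is correct and follows exactly the paper's intended route: the paper's own proof is the one-line observation that the corollary "follows from Remark \ref{rk13} and Lemma \ref{lm21}," and you have simply filled in the two vanishing statements $\secat(\dot\tau)=0$ (from the normalized nowhere-zero section) and $\secat(\ddot\tau)=0$ (from the complex structure) before substituting into (\ref{eneq}). Your additional check that $\rk(\tau)\ge 2$ holds automatically is a worthwhile detail the paper leaves implicit.
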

\begin{proof}
This follows from Remark \ref{rk13} and Lemma \ref{lm21}. 
\end{proof}
\begin{corollary}
Let $\xi: E\to B$ be a vector bundle admitting two continuous linearly independent nowhere zero sections. 
Then
$\TC[\dot\xi : \dot E\to B]\le 2$. 
\end{corollary}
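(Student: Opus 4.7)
The plan is to reduce to the previous corollary by extracting a trivial rank-$2$ subbundle from the two given sections.

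First I would observe that if $\sigma_1,\sigma_2:B\to E$ are continuous nowhere-zero sections that are pointwise linearly independent, then the assignment
\[
B\times\mathbb{R}^2\longrightarrow E,\qquad (b,(a_1,a_2))\longmapsto a_1\sigma_1(b)+a_2\sigma_2(b),
\]
is a fibrewise injective linear map whose image $\tau\subset E$ is a trivial rank-$2$ subbundle of $\xi$. Using the metric structure on $\xi$, set $\eta=\tau^{\perp}$; this yields an orthogonal splitting $\xi=\eta\oplus\tau$ in which $\tau$ has rank $2$.

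Next I would verify the hypotheses of the preceding corollary for this splitting. The bundle $\tau\cong\epsilon^{2}$ is trivial, hence isomorphic to the trivial complex line bundle $B\times\mathbb{C}\to B$ under the identification $\mathbb{R}^{2}\cong\mathbb{C}$; in particular $\tau$ admits a complex structure. Moreover $\tau$ manifestly has a nowhere-zero continuous section, given for instance by the unit vector $\|\sigma_1\|^{-1}\sigma_1:B\to\dot E(\tau)$.

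Finally I would invoke the previous corollary, which under exactly these hypotheses yields $\TC[\dot\xi:\dot E\to B]\le 2$. The main (and essentially only) point to be careful about is the verification that pointwise linear independence of two continuous sections upgrades to a subbundle isomorphism $B\times\mathbb{R}^2\cong\tau$; this is standard and poses no obstacle, so the argument is entirely a matter of arranging the hypotheses to match those of the previous corollary.
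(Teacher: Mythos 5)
Your proposal is correct and matches the paper's argument: the paper likewise reduces to the preceding corollary by taking $\tau$ to be the trivial rank-$2$ subbundle spanned by the two sections (with $\eta$ its orthogonal complement), noting that a trivial rank-$2$ bundle admits a complex structure and a nowhere-zero section. You simply spell out the standard verification that two pointwise independent sections span a trivial rank-$2$ subbundle, which the paper leaves implicit.
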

\begin{proof}
This reduces to the previous Corollary with $\tau$ the trivial bundle of rank 2. 
\end{proof}

 \bibliographystyle{amsplain}

\end{document}